\newtheorem{theorem}{Theorem}[section]
\newtheorem{lemma}[theorem]{Lemma}
\theoremstyle{definition}
\newtheorem{definition}[theorem]{Definition}
\newtheorem{remark}[theorem]{Remark}
\theoremstyle{corollary}
\newtheorem{corollary}[theorem]{Corollary}
\theoremstyle{proposition}
\newtheorem{proposition}[theorem]{Proposition}
\theoremstyle{conjecture}
\newtheorem{conjecture}[theorem]{Conjecture}
\theoremstyle{conditionalproposition}
\newtheorem{conditionalproposition}[theorem]{Conditional Proposition}
\numberwithin{equation}{section}
\begin{document}

\title[Double, borderline, and extraordinary eigenvalues]{Double, borderline, and extraordinary eigenvalues of Kac--Murdock--{Szeg\H{o}}  matrices\\ with a complex parameter}

\author{George Fikioris}

\address{School of Electrical and Computer engineering, National Technical University of Athens, GR 157-73 Zografou, Athens, Greece}
\email{gfiki@ece.ntua.gr}

\author{Themistoklis K. Mavrogordatos}
\address{Department of Physics, AlbaNova University Center, SE 106 91, Stockholm, Sweden}
\email{themis.mavrogordatos@fysik.su.se}

\subjclass[2010]{15B05, 15A18, 65F15}

\date{\today}

\keywords{Toeplitz matrix, Kac–-Murdock–-{Szeg\H{o}} matrix, Eigenvalues, Eigenvectors}

\begin{abstract}
For all sufficiently large complex {$\rho$}, and for arbitrary matrix dimension $n$, it is shown that the  Kac–-Murdock–-{Szeg\H{o}} matrix $K_n(\rho)=\left[\rho^{|j-k|}\right]_{j,k=1}^{n}$ possesses exactly two eigenvalues whose magnitude is larger than $n$.  We discuss a number of properties of the two ``extraordinary'' eigenvalues. Conditions are developed that, given $n$, allow us---without actually computing eigenvalues---to find all values $\rho$ that give rise to eigenvalues of magnitude $n$, termed ``borderline'' eigenvalues. The aforementioned values of $\rho$ form two closed curves in the complex-$\rho$ plane. We describe these curves, which are $n$-dependent, in detail.
An interesting borderline case arises when an eigenvalue of $K_n(\rho)$ equals $-n$: apart from certain exceptional cases, this occurs if and only if the eigenvalue is a double one; and if and only if the point $\rho$ is a cusp-like singularity of one of the two closed curves. 
\end{abstract}

\maketitle

\section{Introduction}

This paper is a direct continuation of \cite{Fik2018}. For $\rho \in \mathbb{C}$, it deals with the complex-symmetric Toeplitz matrix 
\begin{equation}\label{matrixdefinition}
K_{n}(\rho)=\left[\rho^{|j-k|}\right]_{j,k=1}^{n}=\begin{bmatrix}
1 & \rho & \rho^2 & \ldots & \rho^{n-1} \\
\rho & 1 & \rho & \ldots & \rho^{n-2} \\
\rho^2 & \rho & 1 & \ldots & \rho^{n-3} \\
\vdots & \vdots & \vdots & \ddots & \vdots \\
\rho^{n-1} & \rho^{n-2} & \rho^{n-3} & \ldots & 1
\end{bmatrix}.
\end{equation}

For the special case $0<\rho<1$, $K_n(\rho)$ is often called the Kac–-Murdock–-{Szeg\H{o}} matrix; see \cite{Fik2018} for a history and a discussion of applications. Assuming throughout that $n=2,3,\ldots$, we categorize the eigenvalues of $K_n(\rho)$ as {follows.}

\begin{definition}\label{def:borddef1}
Let $\rho \in \mathbb{C}$. An eigenvalue $\lambda \in \mathbb{C}$ of $K_{n}(\rho)$ is called {\it ordinary} if $|\lambda| \leq n$ and {\it extraordinary} if $|\lambda|>n$.
\end{definition}

The terms ordinary/extraordinary are consistent with Section 6 of \cite{Fik2018},\footnote{They are specifically consistent with Corollary 6.9 of \cite{Fik2018} which, for $\rho \in \mathbb{R}\setminus\{-1,1\}$, is a necessary and sufficient condition for an eigenvalue to be extraordinary. {Ref.} \cite{Fik2018} actually defines extraordinary eigenvalues differently, see Definition 6.2 of \cite{Fik2018}. That definition, however, appears to be irrelevant to the more general case $\rho \in \mathbb{C}$.} the investigation of which pertains only to the  special case $\rho \in \mathbb{R}$ (this is when $K_n(\rho)$ is a real-symmetric matrix). Ref. \cite{Fik2018} further discusses a number of connections between extraordinary eigenvalues and the notions of wild, outlying, and un-{Szeg\H{o}}-like eigenvalues developed by Trench \cite{Trench1}, \cite{Trench2}, \cite{Bottcher}.

\begin{definition}\label{def:borddef2}
Let $\rho \in \mathbb{C}$. An eigenvalue $\lambda \in \mathbb{C}$ of $K_{n}(\rho)$ is called {\it borderline} if $|\lambda|=n$ {and} {\it{ borderline/double}} {if} $\lambda$ {is a borderline eigenvalue and, concurrently, an (algebraically) double eigenvalue.}
\end{definition}

 Our main goal herein is to find the $\rho$ that give rise to borderline and extraordinary eigenvalues (the said values of $\rho$ are, of course, $n$-dependent).  In other words, this paper extends the concept of extraordinary eigenvalues from the case $\rho \in \mathbb{R}$ to the case  $\rho \in \mathbb{C}$, and develops conditions for the occurrence of extraordinary eigenvalues. 

By Theorems 3.7 and 4.1 of \cite{Fik2018} the eigenvalues of $K_{n}(\rho)$ ($\rho \in \mathbb{C}$) are either of \textit{type-1} or \textit{type-2}. The two types are mutually exclusive. The most  notable distinguishing feature is that type-1 (type-2) eigenvalues correspond to skew-symmetric (symmetric) eigenvectors, see Remark 4.3 of \cite{Fik2018}. 

In the special case $\rho \in \mathbb{R}$, there are at most two extraordinary eigenvalues---one of each type---irrespective of how large $n$ is. In fact, using the notation
\begin{equation}\label{eq:xin}
\xi_n=\frac{n+1}{n-1},
\end{equation}
it readily follows from the results in Section 6 of \cite{Fik2018} and from Lemma 2.5 of \cite{Fik2018} (that lemma is repeated as Lemma \ref{lemma:lemma1} below) {that the following holds.}

\begin{proposition}\label{prop:realcase}  \cite{Fik2018} Let $\rho \in \mathbb{R}$. The matrix $K_n(\rho)$ possesses an eigenvalue $\lambda$ that is borderline iff $\rho\in\{-\xi_n,-1,1,\xi_n\}$. The specific value and type is as follows,

\noindent (i) $\rho=\xi_n$: $\lambda=-n$ is a type-1 borderline eigenvalue.

\noindent (ii) $\rho=1$: $\lambda=n$ is a type-2 borderline eigenvalue.

\noindent (iii) $\rho=-1$: $\lambda=n$ is a borderline eigenvalue that is of type-1 if $n=2,4,\ldots$ and of type-2 if $n=3,5,\ldots$.

\noindent (iv) $\rho=-\xi_n$: $\lambda=-n$ is a borderline eigenvalue that is of type-2 if $n=2,4,\ldots$ and of type-1 if $n=3,5,\ldots$.

Depending on the (real) value of $\rho$, Table 1 gives the number of type-1 extraordinary eigenvalues and the number of type-2 extraordinary eigenvalues. 
\end{proposition}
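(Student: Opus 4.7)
The plan is to combine the type-1/type-2 dichotomy (Theorems 3.7 and 4.1 of \cite{Fik2018}) with the characterization of extraordinary eigenvalues recalled in Section 6 of \cite{Fik2018}. Since $K_n(\rho)$ is real-symmetric for $\rho\in\mathbb{R}$, every eigenvalue is real, so ``borderline'' simply means $\lambda\in\{-n,n\}$. The argument then has a sufficiency direction, in which an explicit eigenvector is produced at each of the four critical values of $\rho$, and a necessity direction, in which all other real $\rho$ are ruled out by the footnoted Corollary 6.9 of \cite{Fik2018}.

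For sufficiency, I would first dispose of $\rho=\pm 1$ by inspection: $K_n(1)=\mathbf{e}\mathbf{e}^{T}$ with $\mathbf{e}=(1,\ldots,1)^{T}$ has a single nonzero eigenvalue $n$ with symmetric eigenvector, giving (ii); while $K_n(-1)=\mathbf{w}\mathbf{w}^{T}$ with $\mathbf{w}=(1,-1,1,-1,\ldots)^{T}$ again gives eigenvalue $n$, but whether $\mathbf{w}$ is symmetric or skew-symmetric under index reversal flips with the parity of $n$, producing (iii). For $\rho=\xi_n$ I would test the vector $v_j=2j-(n+1)$, and for $\rho=-\xi_n$ the sign-twisted vector $v_j=(-1)^{j}(2j-(n+1))$; a direct calculation of $K_n(\rho)\mathbf{v}$, which after summing the geometric tails reduces to a short polynomial identity in $\xi_n$, shows that $\lambda=-n$ in both cases. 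Applying Lemma~\ref{lemma:lemma1} to read off the type from the (skew-)symmetry pattern of $\mathbf{v}$ then yields (i) and (iv).

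For necessity, I would appeal to Corollary 6.9 of \cite{Fik2018}, which is an if-and-only-if criterion for an eigenvalue of $K_n(\rho)$ (with $\rho\in\mathbb{R}\setminus\{-1,1\}$) to satisfy $|\lambda|>n$. Because the eigenvalues depend continuously on $\rho$, borderline $\rho$ are exactly those at which an eigenvalue of $K_n(\rho)$ transitions between the ordinary and extraordinary regimes, that is, at which the strict inequality of Corollary 6.9 degenerates to equality $|\lambda|=n$. Writing this equality out using the explicit type-1 and type-2 secular equations of Section 6 of \cite{Fik2018}, and solving, should give only the real roots $\rho=\pm\xi_n$, which together with the excluded pair $\pm 1$ exhausts the list.

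The main technical point is this last solution step. The secular equations depend transcendentally on $\rho$ (they contain $\rho^{n}$-type terms), so pinning down all real $\rho$ for which they admit a root at $\lambda=\pm n$, with no spurious solutions, requires a careful sign or monotonicity analysis of each secular function. Lemma~\ref{lemma:lemma1}, by linking type directly to eigenvector (anti)symmetry, will allow the type-1 and type-2 cases to be treated independently, and the count of extraordinary eigenvalues collected in Table 1 should emerge as a by-product of the same monotonicity argument.
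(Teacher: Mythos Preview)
The paper does not give a detailed proof of this proposition; it simply records that the statement ``readily follows from the results in Section~6 of \cite{Fik2018} and from Lemma~2.5 of \cite{Fik2018}'' and then cites it. Your proposal is therefore an attempt to unpack that citation, and the sufficiency half---producing explicit eigenvectors at $\rho=\pm 1$ and $\rho=\pm\xi_n$---is correct and, if anything, more concrete than the parametric route through the secular equations that \cite{Fik2018} itself uses (cf.\ the proof of Lemma~\ref{lemma:lemma3}, which invokes Theorem~6.5 of \cite{Fik2018} at $\rho=1,\xi_n$ rather than writing down eigenvectors).

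The necessity half, however, has a logical gap. You argue that ``borderline $\rho$ are exactly those at which an eigenvalue transitions between the ordinary and extraordinary regimes,'' but continuity only gives one direction: a transition forces $|\lambda|=n$ somewhere, not conversely. An eigenvalue could in principle touch $n$ from below and retreat without any crossing, so $|\lambda(\rho_0)|=n$ does not by itself place $\rho_0$ on the boundary of the set described by Corollary~6.9. You recognize this when you fall back on solving the secular equations directly at $|\lambda|=n$ and flag it as ``the main technical point.'' The route implicit in the paper's citation sidesteps this entirely: by Lemma~\ref{lemma:lemma3} every eigenvalue is $\pm\sin(n\mu)/\sin\mu$ for some $\mu$ satisfying \eqref{eq:eigen1eq}, and Section~6 of \cite{Fik2018} shows that for real $\rho$ the relevant $\mu$ is either real or purely imaginary. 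Lemma~\ref{lemma:lemmath} then gives $|\sin(n\mu)/\sin\mu|=n$ iff $\mu\in\{0,\pi\}\pmod{2\pi}$, and substituting these two values into the two secular equations \eqref{eq:eigen1eq} immediately yields the four values $\pm 1,\pm\xi_n$ together with the type and parity pattern. No monotonicity analysis of transcendental functions of $\rho$ is needed.
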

\begin{table}[h!]
\begin{center}
    \begin{tabular}{ | p{3cm} | p{4.5cm} | p{4.5cm} |}
    \hline value of $\rho$
     & number of type-1 \newline extraordinary eigenvalues & number of type-2 \newline extraordinary eigenvalues \\ \hline
    $\rho<-\xi_n$ & $1$ & $1$ \\ \hline
    $-\xi_n \leq \rho < -1$ & $\begin{cases}1, {\rm \,if\,\,} n=2,4,6,\ldots \\ 0, {\rm \,if\,\,} n=3,5,7,\ldots \end{cases}$ & $\begin{cases}0, {\rm \,if\,\,} n=2,4,6, \ldots \\ 1, {\rm \,if \,\,} n=3,5,7,\ldots \end{cases}$ \\ \hline
    $-1 \leq \rho \leq 1$ & $0$ & $0$  \\ \hline
    $1 < \rho \leq \xi_n$ & $0$ & $1$  \\ \hline
    $\rho>\xi_n$ & $1$ & $1$  \\ \hline
    \end{tabular}
\end{center}
    \caption{Numbers (0 or 1) of extraordinary type-1 and extraordinary type-2 eigenvalues for the special case $\rho \in \mathbb{R}$.}\label{tab:specialcaserho}
\end{table}

The present paper will help us view Proposition \ref{prop:realcase} as a corollary of more general results for which $\rho \in \mathbb{C}$. And---as is often the case---the extension into the complex domain will help us better understand and visualize the special case $\rho \in \mathbb{R}$, including certain particularities of this case. {Our figures, for example, will clearly illustrate that  borderline/double eigenvalues can appear} when $\rho \in \mathbb{C}\setminus\mathbb{R}$, but not when $\rho\in \mathbb{R}$. 

\section{Preliminaries}

What follows builds upon results (or corollaries of results) from \cite{Fik2018}, given in this section as lemmas. Our first lemma is Lemma 2.5 of \cite{Fik2018} (throughout this paper, the overbar denotes the complex conjugate){.}

\begin{lemma}\label{lemma:lemma1}
\cite{Fik2018} Let $\rho \in \mathbb{C}$, let $(\lambda,\mathbf{y})$ be an eigenpair of $K_n(\rho)$, and let $J_n$ be the {signature matrix}
\begin{equation*}
J_{n}=\begin{bmatrix}
1 & 0 & \cdots & 0 & 0 \\
0 & -1 & \cdots & 0 & 0 \\
\vdots & \vdots & \ddots & 0 & 0 \\
0 & 0 & \cdots & (-1)^{n-2} & 0 \\
0 & 0 & \cdots & 0& (-1)^{n-1} 
\end{bmatrix}.
\end{equation*}
Then $K_{n}(\bar{\rho})$ and $K_{n}(-\rho)$ possess the eigenpairs $(\bar{\lambda},\bar{\mathbf{y}})$ and $(\lambda,J_n\mathbf{y})$, respectively. 
\end{lemma}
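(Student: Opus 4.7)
The plan is to prove the two assertions separately; both reduce to simple algebraic manipulations that exploit the special structure of the entries of $K_n(\rho)$, namely the monomial form $\rho^{|j-k|}$ with integer exponent, together with the involutory property $J_n^2 = I_n$.

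For the first assertion, I would start from the observation that $\overline{\rho^{|j-k|}} = \bar\rho^{|j-k|}$ for every pair $(j,k)$, so the entrywise conjugate of $K_n(\rho)$ in (\ref{matrixdefinition}) is $\overline{K_n(\rho)} = K_n(\bar\rho)$. Taking the complex conjugate of both sides of $K_n(\rho)\mathbf{y} = \lambda\mathbf{y}$ and using that conjugation distributes over matrix-vector products then gives $K_n(\bar\rho)\bar{\mathbf{y}} = \bar\lambda\bar{\mathbf{y}}$, which is the desired eigenpair (nonzero since $\mathbf{y}\neq\mathbf{0}$).

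For the second assertion, the crux is the similarity identity $J_n K_n(\rho) J_n = K_n(-\rho)$. I would verify it entrywise: $(J_n K_n(\rho) J_n)_{jk} = (-1)^{j-1}\rho^{|j-k|}(-1)^{k-1} = (-1)^{j+k}\rho^{|j-k|}$, and since $j+k$ and $|j-k|$ have the same parity, $(-1)^{j+k} = (-1)^{|j-k|}$, so the product equals $(-\rho)^{|j-k|}$, which is exactly the $(j,k)$ entry of $K_n(-\rho)$. Multiplying on the right by $J_n$ and using $J_n^2 = I_n$ rewrites the identity as $K_n(-\rho)J_n = J_n K_n(\rho)$; applying both sides to $\mathbf{y}$ then yields $K_n(-\rho)(J_n\mathbf{y}) = J_n(\lambda\mathbf{y}) = \lambda(J_n\mathbf{y})$, with $J_n\mathbf{y}\neq\mathbf{0}$ by invertibility of $J_n$. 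The argument is essentially algebraic and I do not anticipate any real obstacle: the only thing one must be mildly careful about is the sign bookkeeping in identifying $(-1)^{j+k}$ with $(-1)^{|j-k|}$.
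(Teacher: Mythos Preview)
Your proof is correct and complete; the entrywise identities $\overline{K_n(\rho)}=K_n(\bar\rho)$ and $J_nK_n(\rho)J_n=K_n(-\rho)$ are exactly the standard route, and your parity check $(-1)^{j+k}=(-1)^{|j-k|}$ is fine. The paper itself does not prove this lemma---it is simply quoted as Lemma~2.5 of \cite{Fik2018}---so there is no in-paper proof to compare against.
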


If $\mathbf{y}$ is a symmetric (or skew-symmetric) vector, then $J_n\mathbf{y}$ remains symmetric (or skew-symmetric) if $n$ is odd, but becomes skew-symmetric (or symmetric) if $n$ is even. Lemma \ref{lemma:lemma1} {thus gives the following result.}

\begin{lemma}\label{lemma:lemma2}
For $\rho \in \mathbb{C}$, let $\lambda$ be a type-1 (or type-2) eigenvalue of $K_n(\rho)$. Then\\
(i) $\bar{\lambda}$ is a type-1 (or type-2) eigenvalue of $K_{n}(\bar{\rho})$.\\
(ii) For $n=3,5,7,\ldots$, $\lambda$ is a type-1 (or type-2) eigenvalue of $K_{n}(-\rho)$.\\
(iii) For $n=2,4,6,\ldots$, $\lambda$ is a type-2 (or type-1) eigenvalue of $K_{n}(-\rho)$.
\end{lemma}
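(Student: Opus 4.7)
The plan is to apply Lemma~\ref{lemma:lemma1} directly and then track how each of the two transformations---componentwise conjugation $\mathbf{y}\mapsto\bar{\mathbf{y}}$ and the signature-multiplication $\mathbf{y}\mapsto J_n\mathbf{y}$---affects the symmetry class of the eigenvector. Since type-1 (respectively type-2) eigenvalues are characterized by admitting a skew-symmetric (respectively symmetric) eigenvector, and since the two types are mutually exclusive, it suffices in each part to exhibit one eigenvector of the appropriate symmetry.

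For part (i) I would let $\mathbf{y}$ be a skew-symmetric (or symmetric) eigenvector of $K_n(\rho)$ corresponding to $\lambda$. Lemma~\ref{lemma:lemma1} then delivers the eigenpair $(\bar\lambda,\bar{\mathbf{y}})$ of $K_n(\bar\rho)$. The defining relation $y_k = \pm y_{n+1-k}$ is manifestly preserved under componentwise conjugation, so $\bar{\mathbf{y}}$ has the same symmetry type as $\mathbf{y}$, which settles (i).

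For parts (ii) and (iii) Lemma~\ref{lemma:lemma1} asserts that $(\lambda, J_n\mathbf{y})$ is an eigenpair of $K_n(-\rho)$. Writing $\epsilon=+1$ when $\mathbf{y}$ is symmetric and $\epsilon=-1$ when it is skew-symmetric, a one-line calculation gives
\[
(J_n\mathbf{y})_{n+1-k} = (-1)^{n-k}\,y_{n+1-k} = (-1)^{n-k}\,\epsilon\, y_k = (-1)^{n-1}\,\epsilon\,(J_n\mathbf{y})_k.
\]
Hence $J_n\mathbf{y}$ has symmetry sign $(-1)^{n-1}\epsilon$: equal to $\epsilon$ when $n$ is odd, which yields (ii); and equal to $-\epsilon$ when $n$ is even, which yields (iii).

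I do not expect a serious obstacle: the entire argument reduces to the parity check of $(-1)^{n-1}$ combined with the relation $y_k = \pm y_{n+1-k}$. The only subtlety worth flagging is that ``type'' is an attribute of an eigenvalue rather than of an individual eigenvector; choosing $\mathbf{y}$ with a prescribed symmetry at the outset is legitimate precisely because type-1 and type-2 are mutually exclusive, as already established in \cite{Fik2018}.
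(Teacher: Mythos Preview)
Your proposal is correct and follows essentially the same route as the paper: the paper's argument is the one-line remark that $J_n\mathbf{y}$ preserves the symmetry class of $\mathbf{y}$ when $n$ is odd and swaps it when $n$ is even, together with Lemma~\ref{lemma:lemma1}. Your version is simply a more explicit write-up of that parity check (and of the obvious fact that conjugation preserves symmetry), and your closing caveat about ``type'' being an eigenvalue attribute is a reasonable clarification.
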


Theorems 4.1, 4.5, and 6.5 of \cite{Fik2018} {imply the lemma that follows.}

\begin{lemma}\label{lemma:lemma3} \cite{Fik2018}
Let $\rho \in \mathbb{C}$. $\lambda \in \mathbb{C}$ is a type-1 (or type-2) eigenvalue of $K_n(\rho)$ iff
\begin{equation}\label{eq:lambda}
\lambda=-\frac{\sin(n\mu)}{\sin{\mu}} \quad \left( {\rm or}\,\,\lambda=\frac{\sin(n\mu)}{\sin{\mu}}  \right),
\end{equation}
where $\mu\in \mathbb{C}$ satisfies
\begin{equation}\label{eq:eigen1eq}
\rho=\frac{\sin{\frac{(n+1)\mu}{2}}}{\sin{\frac{(n-1)\mu}{2}}} \quad \left({\rm or}\,\,  \rho=\frac{\cos{\frac{(n+1)\mu}{2}}}{\cos{\frac{(n-1)\mu}{2}}}\right).
\end{equation}

For $\rho \in \mathbb{C}\setminus\{-1,0,1\}$, the $\lambda$ given by (\ref{eq:lambda}) is a repeated type-1 (or repeated type-2) eigenvalue iff, in addition to (\ref{eq:eigen1eq}), $\mu$ satisfies 
\begin{equation}\label{eq:eigen2eq}
\xi_n\cos{\frac{(n+1)\mu}{2}}=\rho \cos{\frac{(n-1)\mu}{2}} \quad \left({\rm or}\,\, \xi_n\sin{\frac{(n+1)\mu}{2}}= \rho \sin{\frac{(n-1)\mu}{2}} \right).
\end{equation}
\end{lemma}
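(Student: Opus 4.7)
The plan is to reduce the eigenvalue equation $K_n(\rho)\mathbf{y}=\lambda\mathbf{y}$ to a scalar linear recurrence by exploiting the well-known fact that, for $\rho\ne\pm1$, the matrix $K_n^{-1}(\rho)$ is tridiagonal with diagonal entries $1,1+\rho^2,\ldots,1+\rho^2,1$ and off-diagonal entries $-\rho$, all scaled by $(1-\rho^2)^{-1}$. Rewriting $K_n^{-1}(\rho)\mathbf{y}=\lambda^{-1}\mathbf{y}$ (for $\lambda\neq 0$) and clearing denominators produces the three-term recurrence
\[
\rho\,y_{k+1}-\Big(1+\rho^2-\tfrac{1-\rho^2}{\lambda}\Big)y_k+\rho\,y_{k-1}=0,\qquad k=2,\ldots,n-1,
\]
together with two boundary equations coming from the first and last rows. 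Introducing a complex parameter $\mu$ through $2\rho\cos\mu=1+\rho^2-(1-\rho^2)/\lambda$ turns the recurrence into a Chebyshev-type equation whose general solution is a linear combination of $e^{\pm ik\mu}$.

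I would then invoke the symmetry classification. Type-1 eigenvectors satisfy $y_{n+1-k}=-y_k$ and type-2 satisfy $y_{n+1-k}=y_k$, which collapses the general solution to $y_k=C\sin\bigl((k-\tfrac{n+1}{2})\mu\bigr)$ or $y_k=C\cos\bigl((k-\tfrac{n+1}{2})\mu\bigr)$, respectively. Substituting either ansatz into one of the boundary rows (the other is automatic by symmetry) and applying product-to-sum identities gives precisely (\ref{eq:eigen1eq}). For (\ref{eq:lambda}), I would solve the defining relation as $\lambda=(1-\rho^2)/(1+\rho^2-2\rho\cos\mu)$ and, writing $a=(n-1)\mu/2$, $b=(n+1)\mu/2$ so that $a+b=n\mu$ and $b-a=\mu$, substitute $\rho=\sin b/\sin a$ (type-1) or $\rho=\cos b/\cos a$ (type-2). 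Using $\sin^2 a\pm\sin^2 b$ and $\cos^2 a\pm\cos^2 b$ identities, the numerator collapses to $\mp\sin(n\mu)\sin\mu/(\sin a\text{ or }\cos a)^2$ and the denominator to $\sin^2\mu/(\sin a\text{ or }\cos a)^2$, yielding $\lambda=\mp\sin(n\mu)/\sin\mu$ with the signs matched to the two types.

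For the repeated-eigenvalue criterion, the map $\mu\mapsto(\lambda(\mu),\rho(\mu))$ locally parameterizes each spectral branch. A type-1 (respectively type-2) eigenvalue is algebraically double precisely when two distinct roots of (\ref{eq:eigen1eq}) with the same $\rho$ yield the same $\lambda$, which in generic position coalesces to the vanishing of $d\lambda/d\mu$ at $\rho$ fixed, equivalently to $(d\lambda/d\mu)/(d\rho/d\mu)=0$. Differentiating (\ref{eq:lambda}) and (\ref{eq:eigen1eq}) with respect to $\mu$ and using (\ref{eq:eigen1eq}) itself to eliminate $\sin(n\mu)/\sin\mu$ (or its cosine analogue) reduces this condition to exactly (\ref{eq:eigen2eq}), with $\xi_n=(n+1)/(n-1)$ emerging from the ratio of arguments $(n\pm1)/2$ produced by differentiation.

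The main obstacle I anticipate is carrying out the trigonometric simplifications cleanly and with correct signs: the reduction of $(1-\rho^2)/(1+\rho^2-2\rho\cos\mu)$ to $\mp\sin(n\mu)/\sin\mu$ is not conceptually deep but relies on several telescoping cancellations, and one has to check separately the boundary cases where $\sin a$ or $\cos a$ vanishes. A secondary issue is showing that the coalescence criterion indeed captures \emph{algebraic} and not merely geometric multiplicity; this requires a dimension count matching the $n$ roots $\mu$ of the combined type-1/type-2 equations against the degree of the characteristic polynomial of $K_n(\rho)$. The exclusion $\rho\in\{-1,0,1\}$ rules out precisely those $\rho$ for which the tridiagonal-inverse argument or the parametrization degenerates.
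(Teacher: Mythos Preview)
Your derivation of \eqref{eq:lambda}--\eqref{eq:eigen1eq} via the tridiagonal inverse and the symmetric/skew-symmetric ansatz is correct and is, in essence, exactly what the cited paper \cite{Fik2018} does internally; the present paper's proof of this lemma is merely a string of citations to Theorems~4.1, 4.5 and~6.5 of \cite{Fik2018}. Note, though, that your argument as written covers only $\rho\ne\pm1$ and $\lambda\ne0$, whereas the first half of the lemma is stated for all $\rho\in\mathbb{C}$; the paper handles $\rho=\pm1,\pm\xi_n$ by invoking separate results from \cite{Fik2018} and Lemma~\ref{lemma:lemma2}, and you would need an analogous side argument.

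There is a genuine slip in your treatment of the repeated-eigenvalue criterion. When two eigenvalue branches of the same type coalesce at a fixed $\rho$, the curve $\mu\mapsto(\rho(\mu),\lambda(\mu))$ acquires a \emph{vertical} tangent in the $(\rho,\lambda)$-plane: locally $\lambda-\lambda_0\sim(\rho-\rho_0)^{1/2}$, so $d\lambda/d\rho\to\infty$, not $0$. The correct coalescence condition is therefore $d\rho/d\mu=0$ (with $d\lambda/d\mu\ne0$), which is the reciprocal of what you wrote. Fortunately this error is self-correcting in your computation: differentiating \eqref{eq:eigen1eq} with respect to $\mu$ and setting the result to zero gives
\[
\tfrac{n+1}{2}\cos\tfrac{(n+1)\mu}{2}\,\sin\tfrac{(n-1)\mu}{2}
=\tfrac{n-1}{2}\sin\tfrac{(n+1)\mu}{2}\,\cos\tfrac{(n-1)\mu}{2},
\]
and dividing through by $\tfrac{n-1}{2}\sin\tfrac{(n-1)\mu}{2}$ and using \eqref{eq:eigen1eq} yields \eqref{eq:eigen2eq} directly, with $\xi_n$ appearing as $(n+1)/(n-1)$. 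By contrast, $d\lambda/d\mu=0$ gives $\tan(n\mu)=n\tan\mu$, which is \emph{not} \eqref{eq:eigen2eq}. So keep your computational plan (differentiate $\rho(\mu)$), but reverse the heuristic justification. Your remark about needing to match root counts so that the parametric criterion captures algebraic multiplicity is well taken and is indeed part of what Theorem~4.5 of \cite{Fik2018} establishes.
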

\begin{proof}
Eqns. (\ref{eq:lambda}) and (\ref{eq:eigen1eq}) follow from: (i) Theorem 4.1 of \cite{Fik2018} when $\rho\ne\pm 1, \pm \xi_n$; (ii) Theorem 6.5 of \cite{Fik2018} when $\rho=1$ or $\rho=\xi_n$; and (iii) Lemma \ref{lemma:lemma2} when $\rho=-1$ or $\rho=-\xi_n$ [use (ii) with $\mu+\pi$  in place of $\mu$]. The first equation in (\ref{eq:eigen2eq}) (for the type-1 case) is (4.21) in Theorem 4.5 of \cite{Fik2018}, and the second one is entirely similar. 
\end{proof}

Throughout, we also use the following elementary statements about the Chebyshev polynomial $U_{n-1}(t)$, where $t=\cos x$ or $t=\cosh x${.}
\begin{lemma}\label{lemma:lemmath}
For $n=2,3,4,\ldots$ and $x\in \mathbb{R}$, we have
\begin{equation}\label{eq:t}
\left|\frac{\sin(nx)}{\sin x}\right|\le n,
\end{equation}
and 
\begin{equation}\label{eq:h}
\frac{\sinh(nx)}{\sinh x}\geq n.
\end{equation}
Equality in (\ref{eq:t}) occurs iff $x=0,\pm\pi,\pm 2\pi,\ldots$. In (\ref{eq:h}), equality occurs iff $x=0$.
\end{lemma}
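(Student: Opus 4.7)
The plan is to derive both inequalities from a single algebraic identity, namely the Dirichlet-kernel style formula
\begin{equation*}
\frac{\sin(ny)}{\sin y}=\sum_{k=0}^{n-1}e^{i(n-1-2k)y},
\end{equation*}
which I would obtain by summing the geometric progression on the right. This is exactly the explicit expansion for the Chebyshev polynomial $U_{n-1}(\cos y)$ that the statement alludes to, and it extends continuously to the zeros of $\sin y$ (where both sides take the value $\pm n$).

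For part (i), I would set $y=x\in\mathbb{R}$, so the right-hand side is a sum of $n$ complex numbers of unit modulus; the triangle inequality immediately gives $|\sin(nx)/\sin x|\le n$. For the equality clause, equality in the triangle inequality for unit vectors forces all summands to coincide, and since consecutive summands differ by the factor $e^{-2ix}$, this is equivalent to $e^{-2ix}=1$, i.e., $x\in\pi\mathbb{Z}$. (The quotient at $x\in\pi\mathbb{Z}$ is read as the polynomial $U_{n-1}(\cos x)$, or equivalently as a limit; a quick check shows this value is $\pm n$, consistent with the equality case.)

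For part (ii), I would substitute $y=ix$ and use $\sin(ix)=i\sinh x$, $\sin(inx)=i\sinh(nx)$ to obtain the real analogue
\begin{equation*}
\frac{\sinh(nx)}{\sinh x}=\sum_{k=0}^{n-1}e^{(2k-(n-1))x},
\end{equation*}
a sum of $n$ positive real numbers whose exponents sum to zero, so the product of the summands equals $1$. The AM--GM inequality then yields $\sinh(nx)/\sinh x\ge n$, with equality iff all $n$ summands coincide; since consecutive ratios equal $e^{2x}$, equality forces $e^{2x}=1$, hence $x=0$ on $\mathbb{R}$. There is really no hard step here: the whole argument is mechanical once the geometric-sum identity is written down, and the only subtlety worth flagging is the $0/0$ interpretation of the left-hand side of (i) at the integer multiples of $\pi$.
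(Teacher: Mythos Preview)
Your argument is correct. The geometric-sum identity you write is the standard explicit form of $U_{n-1}(\cos y)$, and the triangle inequality (for part~(i)) together with AM--GM (for part~(ii)) dispatch both inequalities cleanly, including the equality clauses. The only cosmetic remark is that the $0/0$ caveat you flag for~\eqref{eq:t} applies equally to~\eqref{eq:h} at $x=0$; but you already covered this globally when you noted that the identity extends continuously to the zeros of $\sin y$.

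As for comparison with the paper: there is nothing to compare. The paper states Lemma~\ref{lemma:lemmath} without proof, describing it as ``elementary statements about the Chebyshev polynomial $U_{n-1}(t)$,'' and moves on immediately. Your write-up therefore fills a gap the authors deliberately left, and does so in a way that matches the Chebyshev-polynomial framing they invoke.
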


\section{Double eigenvalues}

Ref. \cite{Fik2018} shows that (excluding certain trivial cases) the only repeated eigenvalues of  $K_{n}(\rho)$ are double eigenvalues equal to $-n$. Theorem \ref{th:double} recapitulates this and adds a converse result, namely that (barring exceptional cases) an eigenvalue equal to $-n$ is necessarily a double eigenvalue.

\begin{theorem}\label{th:double}
Let $\rho \in \mathbb{C}\setminus \{-\xi_{n}, -1,0, 1, \xi_{n}\}$ and let $\lambda$ be an eigenvalue of $K_n(\rho)$. Then the following three statements are equivalent:\\
(i) $\lambda$ is a repeated eigenvalue of $K_n(\rho)$.\\
\noindent (ii) $\lambda$ is a double eigenvalue of $K_n(\rho)$.\\
\noindent (iii) $\lambda=-n$.
\end{theorem}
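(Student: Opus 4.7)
The plan is to base everything on Lemma \ref{lemma:lemma3}, which parametrizes the (repeated) type-1 and type-2 eigenvalues via an auxiliary complex variable $\mu$. The implication (ii)$\Rightarrow$(i) is immediate by definition, and the upgrade of ``repeated'' to ``exactly double'' contained in (i)$\Rightarrow$(ii) is precisely the result quoted from \cite{Fik2018} in the paragraph preceding the theorem. Consequently the essential new work is to establish the equivalence (i)$\Leftrightarrow$(iii), which I expect to reduce to a single trigonometric identity.

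For (i)$\Rightarrow$(iii) in the type-1 case, I would assume $\lambda$ is repeated, so that some $\mu$ satisfies both (\ref{eq:eigen1eq}) and (\ref{eq:eigen2eq}). Substituting the expression for $\rho$ from (\ref{eq:eigen1eq}) into (\ref{eq:eigen2eq}) and clearing the denominator $\sin\tfrac{(n-1)\mu}{2}$ yields
\[
\xi_n\cos\tfrac{(n+1)\mu}{2}\sin\tfrac{(n-1)\mu}{2} = \sin\tfrac{(n+1)\mu}{2}\cos\tfrac{(n-1)\mu}{2}.
\]
Now I would apply the product-to-sum identities $\sin A\cos B = \tfrac{1}{2}[\sin(A+B)+\sin(A-B)]$ and $\cos A\sin B = \tfrac{1}{2}[\sin(A+B)-\sin(A-B)]$ with $A=(n+1)\mu/2$, $B=(n-1)\mu/2$ (so $A+B=n\mu$, $A-B=\mu$), together with $\xi_n-1=2/(n-1)$ and $\xi_n+1=2n/(n-1)$. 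The whole expression collapses to $\sin(n\mu) = n\sin\mu$, which by (\ref{eq:lambda}) is precisely $\lambda=-n$. The type-2 case is entirely symmetric and produces $\sin(n\mu) = -n\sin\mu$, again giving $\lambda=-n$.

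For (iii)$\Rightarrow$(i) I would read the same computation in reverse. Given $\lambda=-n$, (\ref{eq:lambda}) forces $\sin(n\mu)=\pm n\sin\mu$ (the sign depending on type), and the product-to-sum identities above allow this relation to be re-written as (\ref{eq:eigen2eq}) once (\ref{eq:eigen1eq}) is imposed; Lemma \ref{lemma:lemma3} then declares $\lambda$ repeated, and the inherited result from \cite{Fik2018} upgrades this to ``double''. The main obstacle I anticipate is not the algebra itself but the bookkeeping around corner cases: one must ensure that neither $\sin\tfrac{(n-1)\mu}{2}$ nor $\cos\tfrac{(n-1)\mu}{2}$ vanishes when clearing denominators. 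The excluded set $\{-\xi_n,-1,0,1,\xi_n\}$ is exactly where the $\rho\leftrightarrow\mu$ correspondence of Lemma \ref{lemma:lemma3} degenerates (for instance $\rho=\xi_n$ is the limit $\mu\to 0$ in the type-1 formula, while $\rho=0$ collapses $K_n$ to the identity), so restricting $\rho$ outside this finite set sanitizes every division performed in the argument.
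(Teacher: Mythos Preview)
Your proposal is correct and follows essentially the same route as the paper: both reduce the equivalence to the identity $\sin(n\mu)=\pm n\sin\mu$ via Lemma~\ref{lemma:lemma3}, with the paper writing the intermediate step as a tangent ratio $\tan\tfrac{(n-1)\mu}{2}\big/\tan\tfrac{(n+1)\mu}{2}=\xi_n^{\mp 1}$ rather than product-to-sum. The only substantive difference is that the paper cites \cite{Fik2018} for (i)$\Rightarrow$(iii), whereas you supply that direction directly by running the same trigonometric computation backwards; this makes your argument marginally more self-contained at no extra cost.
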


\begin{proof}
(ii) $\implies$ (i) is obvious, while (i) $\implies$ (ii) and (i) $\implies$ (iii) are in Theorem 4.5 of  \cite{Fik2018}. To show (iii) $\implies$ (i), let
$-n$ be a type-1 (or type-2) eigenvalue of $K_n(\rho)$. Then, by Lemma \ref{lemma:lemma3},
\begin{equation}\label{eq:lambdan}
\frac{\sin(n\mu)}{\sin{\mu}}=n \quad \left( {\rm or}\,\,\frac{\sin(n\mu)}{\sin{\mu}}=-n  \right),
\end{equation}
where $\mu$ satisfies \eqref{eq:eigen1eq}. Eqns. \eqref{eq:lambdan} and \eqref{eq:xin} imply the equalities
\begin{equation*}
\frac{\tan\frac{(n-1)\mu}{2}}{\tan\frac{(n+1)\mu}{2}}=\frac{1}{\xi_n}\quad\left({\rm or}\,\,\frac{\tan\frac{(n-1)\mu}{2}}{\tan\frac{(n+1)\mu}{2}}=\xi_n\right),
\end{equation*}
which, when combined with \eqref{eq:eigen1eq}, yield \eqref{eq:eigen2eq}. Therefore the eigenvalue $-n$ is a repeated one by Lemma \ref{lemma:lemma3}, completing our proof.
\end{proof}
\begin{remark}\label{remark: exceptions}
In Theorem \ref{th:double}, all excluded values of $\rho$  are exceptional. The exceptions are as {follows.} When $\rho=\pm\xi_n$, $\lambda=-n$ is an eigenvalue (see Proposition \ref{prop:realcase}) that is non-repeated by Proposition 6.1 of \cite{Fik2018}. When $\rho=\pm 1$, $\lambda=0\ne -n$ is (at least when $n>2$) a repeated eigenvalue, see (2.7) of \cite{Fik2018} or (6.22) of \cite{Fik2018}.  When $\rho=0$, finally, $\lambda=1\ne -n$ is a repeated eigenvalue of the identity matrix $K_n(0)$.
\end{remark}

\begin{corollary}\label{th:criticaldef}
{Let $\rho \in \mathbb{C}$. If $\lambda$ is a borderline/double eigenvalue of $K_{n}(\rho)$, then $\rho \in \mathbb{C}\setminus\mathbb{R}$ and $\lambda=-n$. By way of a converse, an eigenvalue $\lambda$ of $K_{n}(\rho)$ is a borderline/double eigenvalue if any one of the following statements is true:}\\ 
(i) $\lambda$ is a double eigenvalue and $\lambda=-n$; or \\
(ii) $\lambda$ is a repeated borderline eigenvalue; or\\
(iii)  $\lambda=-n$ and $\rho\ne\pm\xi_n$; or\\
(iv) $\lambda$ is a repeated eigenvalue, $\rho\ne\pm 1$, and $\rho\ne 0$.
\end{corollary}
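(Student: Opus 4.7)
The plan is to reduce everything to Theorem \ref{th:double}, Remark \ref{remark: exceptions}, and Proposition \ref{prop:realcase}. Letting $E := \{-\xi_n, -1, 0, 1, \xi_n\}$, Theorem \ref{th:double} gives the equivalence of ``repeated'', ``double'', and ``$\lambda = -n$'' whenever $\rho \notin E$, while the Remark (supplemented by the trivial spectra of $K_n(0) = I$ and of $K_n(\pm 1)$, whose eigenvalues are $n$ once and $0$ with multiplicity $n-1$) enumerates the repeated eigenvalues inside $E$.

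For the forward implication, I assume $\lambda$ is borderline/double, so it is repeated and $|\lambda| = n$. If $\rho \notin E$, Theorem \ref{th:double} immediately yields $\lambda = -n$. If $\rho \in E$, I dispatch each value by inspection: at $\rho = \pm 1$ the only repeated eigenvalue is $0$, which is not borderline since $n \geq 2$; at $\rho = 0$ the only eigenvalue is $1 \neq n$; at $\rho = \pm\xi_n$ the Remark asserts that $-n$ is non-repeated, so no repeated eigenvalue survives there either. Hence $\rho \notin E$ and $\lambda = -n$. To conclude $\rho \notin \mathbb{R}$, Proposition \ref{prop:realcase} shows that every real $\rho$ supporting a borderline eigenvalue lies in $\{-\xi_n, -1, 1, \xi_n\} \subset E$, a set we have just excluded.

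For the converse, each of (i)--(iv) supplies the half of ``borderline/double'' that is absent from the hypothesis. Statement (i) is immediate from Definition \ref{def:borddef2}. For (ii), I must upgrade ``repeated'' to ``double'': Theorem \ref{th:double} delivers this off $E$, and the same inspection as above shows no repeated borderline eigenvalue can exist inside $E$. For (iii), borderline is automatic from $|\lambda| = n$, and Theorem \ref{th:double} produces doubleness once I rule out $\rho = \pm 1$ and $\rho = 0$ (since $-n$ belongs to neither spectrum). For (iv), Theorem \ref{th:double} delivers ``double with $\lambda = -n$'' directly, provided $\rho \neq \pm\xi_n$.

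The main obstacle is the residual sub-case $\rho = \pm\xi_n$ in (iv) (and, tacitly, in the forward direction and in (ii)): I need the stronger statement that \emph{no} eigenvalue of $K_n(\pm\xi_n)$ is repeated, while the Remark asserts this only for $\lambda = -n$. I plan to obtain this either by citing Proposition 6.1 of \cite{Fik2018} (the source already used by the Remark) or, independently, via Lemma \ref{lemma:lemma3}: simultaneous satisfaction of \eqref{eq:eigen1eq} and \eqref{eq:eigen2eq} on the type-1 branch is equivalent to $\tan\tfrac{(n+1)\mu}{2} = \xi_n \tan\tfrac{(n-1)\mu}{2}$, and imposing $\rho = \pm\xi_n$ leaves only degenerate $\mu$ for which $\sin\tfrac{(n\pm 1)\mu}{2}$ both vanish and $\lambda = -n$ is recovered only in the limiting sense, which contradicts the Remark. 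An analogous calculation handles the type-2 branch.
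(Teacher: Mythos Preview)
The paper states this corollary without proof, treating it as immediate from Theorem \ref{th:double} and Remark \ref{remark: exceptions}; your argument is the natural unpacking of that, and the overall decomposition is correct.

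Two points deserve tightening. In the forward direction (and in (ii)) at $\rho=\pm\xi_n$, you write that ``no repeated eigenvalue survives there'' on the strength of Remark \ref{remark: exceptions}, but the Remark only says the \emph{particular} eigenvalue $-n$ is non-repeated. The easier fix, which you have already set up, is to use that $\lambda$ is \emph{borderline}: Proposition \ref{prop:realcase} lists $-n$ as the sole borderline eigenvalue at $\rho=\pm\xi_n$, so $\lambda=-n$, and then the Remark gives the contradiction. No global simplicity statement is needed for those two parts.

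For (iv) at $\rho=\pm\xi_n$ the stronger input is genuinely required, since the hypothesis supplies only ``repeated'' with no borderline constraint. Your first suggestion---that Proposition 6.1 of \cite{Fik2018} in fact asserts simplicity of all eigenvalues for real $\rho\notin\{-1,0,1\}$---is the intended route and closes the gap. Your Lemma \ref{lemma:lemma3} computation is also viable: the tangent identity $\tan\tfrac{(n+1)\mu}{2}=\xi_n\tan\tfrac{(n-1)\mu}{2}$ is correct, and combining it with $\rho=\xi_n$ in \eqref{eq:eigen1eq} forces either a degenerate $\mu$ (both half-angle sines vanish, so \eqref{eq:eigen1eq} is indeterminate) or $\cos\tfrac{(n+1)\mu}{2}=\cos\tfrac{(n-1)\mu}{2}$, which upon substitution yields $\rho=-1$ rather than $\xi_n$. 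Either way no admissible $\mu$ exists and (iv) is vacuous at $\pm\xi_n$. Your closing clause ``which contradicts the Remark'' is out of place, though: the Remark is not invoked at this step, and the correct conclusion is simply that $K_n(\pm\xi_n)$ has no repeated eigenvalue.
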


Theorem 4.5 of \cite{Fik2018} allows one to compute (via the solution to a polynomial equation) all $\rho\in\mathbb{C}$ for which $K_n(\rho)$ possesses a type-1 (or type-2) {borderline/double} eigenvalue.

\section{Borderline eigenvalues and closed curves $B_n^{(1)}$, $B_n^{(2)}$}

The theorem that follows is the heart of this paper, as it enables us to compute all complex values $\rho$ that give rise to type-1 (or  type-2) \textit{borderline} eigenvalues. The theorem specifically asserts that the said values of $\rho$ coincide with the range of a complex-valued function $f_n^{(1)}(u)$ [or $f_n^{(2)}(u)$], where $u \in [-\pi, \pi]$. The functions $f_n^{(1)}(u)$ and $f_n^{(2)}(u)$ are defined via the unique solution to a certain transcendental equation.

\begin{theorem}\label{th:bordeigen}
For $\rho \in \mathbb{C}$, $K_{n}(\rho)$ possesses a borderline type-1 eigenvalue $\lambda$ iff $\rho=f_n^{(1)}(u)$ and $\lambda=b_n^{(1)}(u)$ where
\begin{equation}\label{eq:rhosdef}
f_n^{(1)}(u)=\frac{\displaystyle\sin{\frac{(n+1)\mu(n,u)}{2}}}{\displaystyle\sin{\frac{(n-1)\mu(n,u)}{2}}},\qquad b_n^{(1)}(u)=-\frac{\sin[n\mu(n,u)]}{\sin\mu(n,u)},
\end{equation}
in which 
\begin{equation}\label{eq:mupar}
\mu(n,u)=u+i v(n,u).
\end{equation}
In \eqref{eq:mupar}, $u \in [-\pi, \pi]$ is arbitrary while $v(n,u)$ is the function of $u$ that is defined as the unique non-negative root of the transcendental equation
\begin{equation}\label{eq:ba}
\sinh^2(nv)-n^2 \sinh^2{v}=g_{n}(u), \quad v \geq 0,
\end{equation}
in which 
\begin{equation}\label{eq:an}
g_{n}(u)=n^2 \sin^2 {u}-\sin^2(nu), \quad u \in [-\pi, \pi].
\end{equation}
Similarly, $K_{n}(\rho)$ possesses a borderline type-2 eigenvalue $\lambda$ iff $\rho=f_n^{(2)}(u)$ and $\lambda=b_n^{(2)}(u)$ where
\begin{equation}\label{eq:rhocdef}
f_n^{(2)}(u)=\frac{\cos{\displaystyle\frac{(n+1)\mu(n,u)}{2}}}{\cos{\displaystyle\frac{(n-1)\mu(n,u)}{2}}},\qquad b_n^{(2)}(u)=\frac{\sin[n\mu(n,u)]}{\sin\mu(n,u)},
\end{equation}
in which, once again,  $u \in [-\pi, \pi]$ is arbitrary and $\mu(n,u)$ is found from \eqref{eq:mupar}--\eqref{eq:an}. 
\end{theorem}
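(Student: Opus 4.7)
The plan is to parametrize every type-1 (respectively type-2) eigenvalue by a complex number $\mu$ via Lemma \ref{lemma:lemma3}, then convert the borderline condition $|\lambda|=n$ into a condition on $\mu=u+iv$ that splits into the pair of real equations defining $v(n,u)$.

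Before anything else, I would note two symmetries of the parametrization in Lemma \ref{lemma:lemma3}. First, $\mu\mapsto -\mu$ leaves both $\rho$ and $\lambda$ unchanged, because all three sines in \eqref{eq:lambda}--\eqref{eq:eigen1eq} are odd. Second, $\mu\mapsto \mu+2\pi$ also leaves $\rho$ and $\lambda$ unchanged: $\sin(n\mu)$ and $\sin\mu$ have period $2\pi$, and the factor $(n\pm 1)\pi$ appearing in $\sin\tfrac{(n\pm 1)(\mu+2\pi)}{2}$ contributes the common sign $(-1)^{n\pm 1}=(-1)^{n+1}$ to numerator and denominator, which cancels. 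Consequently, when $(\rho,\lambda)$ is a borderline type-1 eigenpair, I may assume that the $\mu$ furnished by Lemma \ref{lemma:lemma3} satisfies $u:=\operatorname{Re}\mu\in[-\pi,\pi]$ and $v:=\operatorname{Im}\mu\ge 0$.

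Next I would translate $|\lambda|=n$. Writing $\mu=u+iv$ and using the elementary identity $|\sin(a+ib)|^2=\sin^2 a+\sinh^2 b$, the condition $|\sin(n\mu)|^2=n^2|\sin\mu|^2$ becomes
\begin{equation*}
\sin^2(nu)+\sinh^2(nv)=n^2\sin^2 u+n^2\sinh^2 v,
\end{equation*}
which, after rearrangement, is precisely \eqref{eq:ba} with $g_n(u)$ as in \eqref{eq:an}. To see that this equation determines $v\ge 0$ uniquely for each $u\in[-\pi,\pi]$, I would set $h(v):=\sinh^2(nv)-n^2\sinh^2 v$ and invoke Lemma \ref{lemma:lemmath}: inequality \eqref{eq:t} gives $g_n(u)\ge 0$; inequality \eqref{eq:h} gives $h(v)\ge 0$ with equality only at $v=0$; differentiating yields $h'(v)=n\sinh(2nv)-n^2\sinh(2v)\ge 0$ by \eqref{eq:h} applied to $2v$, with strict inequality for $v>0$; and $h(v)\to\infty$ as $v\to\infty$. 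Hence $h$ is a strict bijection $[0,\infty)\to[0,\infty)$, and $v(n,u)$ is well-defined.

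The two implications then collapse to bookkeeping. Forward: given a borderline type-1 $(\rho,\lambda)$, Lemma \ref{lemma:lemma3} supplies $\mu$, the symmetry reduction places $\mu=\mu(n,u)$ for some $u\in[-\pi,\pi]$, and substitution into \eqref{eq:lambda}--\eqref{eq:eigen1eq} gives $\rho=f_n^{(1)}(u)$ and $\lambda=b_n^{(1)}(u)$. Converse: for arbitrary $u\in[-\pi,\pi]$, the construction of $v(n,u)$ forces $|\sin(n\mu)|^2=n^2|\sin\mu|^2$, so Lemma \ref{lemma:lemma3} certifies $b_n^{(1)}(u)$ as a type-1 eigenvalue of $K_n(f_n^{(1)}(u))$ with $|\lambda|=n$. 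The type-2 case is identical: the $\rho$-formula switches sines to cosines, the $\lambda$-formula only changes sign, and the borderline condition still depends only on $|\sin(n\mu)|^2=n^2|\sin\mu|^2$, yielding the same transcendental equation \eqref{eq:ba}--\eqref{eq:an}.

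The main obstacle, such as it is, lies in the symmetry reduction to the fundamental region $u\in[-\pi,\pi]$, $v\ge 0$: one must check that $\mu\mapsto -\mu$ and $\mu\mapsto\mu+2\pi$ really do preserve both $\rho$ and $\lambda$ and together suffice to normalize an arbitrary $\mu\in\mathbb{C}$, so that no borderline pair is missed and no spurious pair is produced. Everything else is a routine computation or a direct appeal to Lemma \ref{lemma:lemmath}.
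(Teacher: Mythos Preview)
Your argument is correct and follows essentially the same route as the paper: invoke Lemma~\ref{lemma:lemma3}, write $\mu=u+iv$, use $2\pi$-periodicity in $u$ together with the $\mu\mapsto-\mu$ symmetry to normalize to $u\in[-\pi,\pi]$ and $v\ge0$, rewrite $|\lambda|=n$ as \eqref{eq:ba}--\eqref{eq:an}, and establish uniqueness of $v(n,u)$ via Lemma~\ref{lemma:lemmath}. The only cosmetic difference is that you check the $\mu\mapsto\mu+2\pi$ invariance of $\rho$ explicitly via the common sign $(-1)^{n+1}$, whereas the paper simply asserts periodicity of the expanded real-and-imaginary-part formulas.
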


\begin{proof} 
In Lemma \ref{lemma:lemma3}, take $|\lambda|=n$ and  
set $u={\rm Re}\mu$ and $v={\rm Im}\mu$ to see that $K_{n}(\rho)$ has a borderline type-1 eigenvalue $\lambda$ iff $\rho=\rho_n(u,v)$ and $\lambda=\lambda_n(u,v)$ where
\begin{equation}\label{eq:rho1uv}
\rho_n(u,v)=\frac{\displaystyle\sin{\frac{(n+1)u}{2}}\cosh{\frac{(n+1)v}{2}}+i\cos{\frac{(n+1)u}{2}}\sinh{\frac{(n+1)v}{2}}}
{\displaystyle\displaystyle\sin{\frac{(n-1)u}{2}}\cosh{\frac{(n-1)v}{2}}+i\cos{\frac{(n-1)u}{2}}\sinh{\frac{(n-1)v}{2}}},\quad u,v \in \mathbb{R},
\end{equation}
and
\begin{equation}\label{eq:lambdannn}
\lambda_n(u,v)=-\frac{\sin[n(u+iv)]}{\sin(u+iv)}
,\quad u,v \in \mathbb{R},
\end{equation}
where $u$, $v$, and $n$ are interrelated via
\begin{equation}\label{eq:uv}
n^2=\frac{\sin^2(nu)+\sinh^2(nv)}{\sin^2{u}+\sinh^2{v}}, \quad u,v \in \mathbb{R}.
\end{equation}
Since the right-hand sides of (\ref{eq:rho1uv})--(\ref{eq:uv}) are $2\pi$-periodic in $u$, we assume $u \in [-\pi, \pi]$ with no loss of generality. Since, also,  $\rho_n(-u,-v)=\rho_n(u,v)$ and $\lambda_n(-u,-v)=\lambda_n(u,v)$, we further assume $v \geq 0$. 

Eqn. (\ref{eq:uv}) is then equivalent to the definition \eqref{eq:an} and the transcendental equation \eqref{eq:ba}. This equation has a {\it unique} solution $v=v(n,u) \geq 0$ because: (i) $g_{n}(u)\geq 0$ for $u\in[-\pi,\pi]$; and (ii) the left-hand side of (\ref{eq:ba}) vanishes when $v=0$ and has a positive derivative in $(0,+\infty)$ (assertions (i) and (ii) are readily shown via Lemma \ref{lemma:lemmath}).

With $v=v(n,u)$ thus determined, the $\rho_n(u,v)$  of (\ref{eq:rho1uv}) and the $\lambda_n(u,v)$ of (\ref{eq:lambdannn}) are no longer functions of $v$, and the notations $f_n^{(1)}(u)=\rho_n(u,v)$ and $b_n^{(1)}(u)=\lambda_n(u,v)$ prove (\ref{eq:rhosdef}) with (\ref{eq:mupar}). We have thus shown all assertions pertaining to type-1 eigenvalues. 

For the type-2 case, proceed as before with 
\begin{equation*}
\rho_n(u,v)=\frac{\displaystyle\cos{\frac{(n+1)u}{2}}\cosh{\frac{(n+1)v}{2}}-i\sin{\frac{(n+1)u}{2}}\sinh{\frac{(n+1)v}{2}}}
{\displaystyle\displaystyle\cos{\frac{(n-1)u}{2}}\cosh{\frac{(n-1)v}{2}}-i\sin{\frac{(n-1)u}{2}}\sinh{\frac{(n-1)v}{2}}},\quad u,v \in \mathbb{R},
\end{equation*}
in place of \eqref{eq:rho1uv}, and with the opposite sign in (\ref{eq:lambdannn}). Otherwise, the proof is identical.
\end{proof}

The lemma that follows lists some properties (to be used several times throughout) of the functions of $u$ encountered in Theorem \ref{th:bordeigen}.

\begin{lemma}\label{lemma:properties}
(i) The real and imaginary parts of the functions  $f_n^{(1)}(u)$ and $f_n^{(2)}(u)$ can be found from 
\begin{equation}\label{eq:t1}
f_n^{(1)}(u)=\frac{[\cos u\cosh(nv)- \cos(nu)\cosh v]-i[\sin u\sinh(nv)- \sin(nu)\sinh v]}{\cosh[(n-1)v]-\cos[(n-1)u]},
\end{equation}
\begin{equation}\label{eq:t2}
f_n^{(2)}(u)=\frac{[\cos u\cosh(nv)+ \cos(nu)\cosh v]-i[\sin u\sinh(nv)+ \sin(nu)\sinh v]}{\cosh[(n-1)v]+ \cos[(n-1)u]},
\end{equation}
in which $v$ stands for the $v(n,u)$ of (\ref{eq:ba})--(\ref{eq:an}), for which 
\begin{equation*}
v(n,u)=v(n,-u)=v(n,\pi-u).
\end{equation*}
(ii) Let $k=1$ or $k=2$ and let $u\in (-\pi,\pi]$. Then there is a one-to-one correspondence between $u$ and $\mu(n,u)$. Furthermore,   
\begin{equation}\label{eq:t10}
u=0 \ {\rm{or}} \ u=\pi\Longleftrightarrow v(n,u)=0 \Longleftrightarrow \mu(n,u)\in\mathbb{R}\Longleftrightarrow f_n^{(k)}(u)\in \mathbb{R}.
\end{equation} 
(iii) The functions $v(n,u)$, $\mu(n,u)$, $f_n^{(1)}(u)$, and $f_n^{(2)}(u)$ are $2\pi$-periodic and continuous.\\ 
(iv) The real values $f_n^{(k)}(0)$ and $f_n^{(k)}(\pm\pi)$, and the corresponding values $b_n^{(k)}(0)$ and $b_n^{(k)}(\pm\pi)$ are given by
\begin{equation}\label{eq:rhosdef0}
f_n^{(1)}(0)=\xi_n,\quad f_n^{(2)}(0)=1,
\end{equation}
\begin{equation}\label{eq:rhosdefpi}
f_n^{(1)}(\pm\pi)=
\begin{cases}-1, {\rm \,if\,\,} n=2,4,\ldots \\ -\xi_n, {\rm \,if\,\,} n=3,5,\ldots, \end{cases} \quad
f_n^{(2)}(\pm\pi)=
\begin{cases}-\xi_n, {\rm \,if\,\,} n=2,4,\ldots \\ -1, {\rm \,if\,\,} n=3,5,\ldots, \end{cases}
\end{equation}
and
\begin{equation}\label{eq:rhdef0more}
b_n^{(1)}(0)=-n,\quad b_n^{(2)}(0)=n,
\end{equation}
\begin{equation}\label{eq:rhdefpimore}
b_n^{(1)}(\pm \pi)=(-1)^nn,
\quad
b_n^{(2)}(\pm \pi)=(-1)^{n+1}n.
\end{equation}
(v) For $k=1$ and $k=2$, we have ${\rm Im} f_n^{(k)}(u)<0$ if $0<u<\pi$ and  ${\rm Im} f_n^{(k)}(u)>0$ if $-\pi<u<0$.\\
(vi) The functions $v(n,u)$, $\mu(n,u)$, $f_n^{(1)}(u)$, and $f_n^{(2)}(u)$ are differentiable for $u\in (-\pi,0)$ and $u\in (0,\pi)$, with
\begin{equation}\label{eq:t3}
\frac{dv(n,u)}{du}=-\frac{\sin(2nu)-n\sin(2u)}{\sinh[2nv(n,u)]-n\sinh[2v(n,u)]},
\end{equation}
\begin{equation}\label{eq:t31}
\frac{d\mu(n,u)}{du}=1+i\,\frac{dv(n,u)}{du},
\end{equation}
\begin{equation}\label{eq:t32}
\frac{df_n^{(k)}(u)}{du}=\frac{-{n}\sin[\mu(n,u)]\pm\sin[n\mu(n,u)]}{1\mp\cos[(n-1)\mu(n,u)]}\,\frac{d\mu(n,u)}{du},\quad \quad k=\left\{{1 \atop 2}\right\},
\end{equation}
where the notation means that the upper (lower) sign corresponds to $k=1$ ($k=2$). 
\end{lemma}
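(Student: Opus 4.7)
The plan is to work through parts (i)--(vi) in order, since most pieces are short algebraic manipulations built on Theorem \ref{th:bordeigen} and Lemma \ref{lemma:lemmath}. For part (i), I would derive \eqref{eq:t1} and \eqref{eq:t2} by multiplying numerator and denominator of each $f_n^{(k)}(u)$ by the complex conjugate of its denominator and then applying the identities $2\sin A\sin\bar B=\cos(A-\bar B)-\cos(A+\bar B)$ and $2\cos A\cos\bar B=\cos(A-\bar B)+\cos(A+\bar B)$ with $A=(n+1)\mu/2$, $B=(n-1)\mu/2$. Since $A-\bar B=u+inv$ and $A+\bar B=nu+iv$, the expansion $\cos(x+iy)=\cos x\cosh y-i\sin x\sinh y$ delivers the stated real/imaginary splits. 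The symmetries $v(n,u)=v(n,-u)=v(n,\pi-u)$ then reduce, by uniqueness of the non-negative root of \eqref{eq:ba}, to the observations $g_n(-u)=g_n(u)$ and $g_n(\pi-u)=g_n(u)$, the latter being just $\sin^2(n\pi-nu)=\sin^2(nu)$.

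For part (ii), $u\mapsto\mu(n,u)$ is injective because $u=\mathrm{Re}\,\mu(n,u)$. The equivalence $u\in\{0,\pi\}\Leftrightarrow v(n,u)=0$ reduces, via uniqueness in \eqref{eq:ba}, to $g_n(u)=0\Leftrightarrow u\in\{0,\pm\pi\}$, which is exactly the equality case of \eqref{eq:t}; next, $v=0\Leftrightarrow\mu\in\mathbb{R}$ is trivial; finally $\mu\in\mathbb{R}\Leftrightarrow f_n^{(k)}(u)\in\mathbb{R}$ uses the forward direction directly and the reverse as the contrapositive of (v). For part (iii), $2\pi$-periodicity is inherited from $g_n$ and the trigonometric functions; smoothness on $(-\pi,0)\cup(0,\pi)$ follows from the implicit function theorem once one observes that the $v$-derivative of the LHS of \eqref{eq:ba} equals $n[\sinh(2nv)-n\sinh(2v)]$, which is strictly positive for $v>0$ by \eqref{eq:h}, while continuity at $u\in\{0,\pm\pi\}$ is a squeeze argument since $g_n(u)\to 0$ forces $v(n,u)\to 0$. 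Part (iv) is then substitution: $\mu(n,0)=0$ and $\mu(n,\pm\pi)=\pm\pi$ plugged into \eqref{eq:rhosdef} and \eqref{eq:rhocdef}, with L'H\^opital handling removable $0/0$ cases and the parity of $n$ controlling whether $(n\pm 1)\pi/2$ is an integer multiple of $\pi$.

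Part (v) is the technical heart and where I expect the main obstacle. For $u\in(0,\pi)$, part (ii) gives $v:=v(n,u)>0$; in \eqref{eq:t1} the denominator $\cosh((n-1)v)-\cos((n-1)u)$ is positive, since $\cosh((n-1)v)>1\ge\cos((n-1)u)$, so the sign of $\mathrm{Im}\,f_n^{(1)}(u)$ is that of $-[\sin u\sinh(nv)-\sin(nu)\sinh v]$. Combining both halves of Lemma \ref{lemma:lemmath}: since $v>0$ and $\sin u>0$, we have $\sinh(nv)>n\sinh v$ strictly and $|\sin(nu)|\le n\sin u$, whence
\begin{equation*}
\sin u\,\sinh(nv)\;>\;n\sin u\,\sinh v\;\ge\;|\sin(nu)|\sinh v\;\ge\;\sin(nu)\sinh v,
\end{equation*}
giving strict positivity of the bracketed quantity and thus $\mathrm{Im}\,f_n^{(1)}(u)<0$; the $f_n^{(2)}$ case is identical via \eqref{eq:t2} (whose denominator $\cosh((n-1)v)+\cos((n-1)u)$ is also positive). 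The case $u\in(-\pi,0)$ follows from the symmetry $f_n^{(k)}(-u)=\overline{f_n^{(k)}(u)}$, which is read off \eqref{eq:t1}--\eqref{eq:t2} together with $v(n,-u)=v(n,u)$.

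For part (vi), implicit differentiation of \eqref{eq:ba} immediately produces \eqref{eq:t3}, whose denominator is nonzero on $(-\pi,0)\cup(0,\pi)$ by the same appeal to \eqref{eq:h}; \eqref{eq:t31} is immediate from $\mu(n,u)=u+iv(n,u)$; and \eqref{eq:t32} follows by the chain rule applied to \eqref{eq:rhosdef} and \eqref{eq:rhocdef}, upon expanding the derivatives of $\sin A/\sin B$ and $\cos A/\cos B$ and using $\sin(A\pm B)=\sin(n\mu)$ or $\sin\mu$ to collapse the cross terms, plus the half-angle identities $2\sin^2((n-1)\mu/2)=1-\cos((n-1)\mu)$ and $2\cos^2((n-1)\mu/2)=1+\cos((n-1)\mu)$ to match the stated denominators.
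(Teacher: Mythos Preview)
Your proposal is correct and follows essentially the same route as the paper's own proof, which is extremely terse (each part is dispatched in one sentence pointing to the relevant definitions and Lemma~\ref{lemma:lemmath}); you have simply filled in the details the paper leaves implicit, most notably the chain of inequalities for part~(v) and the product-to-sum manipulation for part~(i). One small expositional point: you prove the last equivalence in~(ii) by appealing to~(v), and then open~(v) by invoking~(ii); this is not circular, since~(v) only needs the first equivalence $u\in\{0,\pi\}\Leftrightarrow v(n,u)=0$, which you establish independently, but it would read more cleanly if you made that dependency explicit or simply reordered so that~(v) precedes the final implication of~(ii).
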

\begin{proof} (i) Eqns. (\ref{eq:t1}) and (\ref{eq:t2}) can be shown by manipulating the right-hand sides  of the first equations in (\ref{eq:rhosdef}) and  (\ref{eq:rhocdef}), and setting $\mu(n,u)=u+iv(n,u)$.\\
(ii) follows easily from the definitions in Theorem \ref{th:bordeigen} using Lemma \ref{lemma:lemmath} and Lemma \ref{lemma:properties}(i).\\
(iii) Both $g_n(u)$ and the left-hand side of (\ref{eq:ba}) are continuous. Thus $v(n,u)$---which is the unique solution to (\ref{eq:ba})---is continuous. What remains follows from the definitions in Theorem \ref{th:bordeigen}.\\
(iv) follows from (\ref{eq:rhosdef}), (\ref{eq:rhocdef}), and (\ref{eq:t10}).\\
(v) can be shown via (\ref{eq:t1}) and (\ref{eq:t2}).\\
(vi) Eliminate $g_n(u)$ from (\ref{eq:ba}) and (\ref{eq:an}) to obtain an implicit equation relating $u$ and $v$ and then compute $dv/du$ via the partial derivatives of the implicit function. Eqn. (\ref{eq:t10}) and  Lemma \ref{lemma:lemmath} guarantee that the denominator in (\ref{eq:t3}) does not vanish in $(-\pi,0)$ and $(0,\pi)$. Eqn. (\ref{eq:mupar}) gives (\ref{eq:t31}), while chain differentiation of (\ref{eq:rhosdef}) and (\ref{eq:rhocdef})  gives (\ref{eq:t32}). The denominator in (\ref{eq:t32}) is nonzero because (\ref{eq:t10}), $u\in(-\pi,\pi)$, and $u\ne0$ imply $\mu(n,u)\notin \mathbb{R}$.
 \end{proof}

The following restatement of Theorem \ref{th:bordeigen} introduces the closed curves $B^{(k)}_n$ traced out by $f^{(k)}_n(u)$; they will be referred to as \textit{borderline curves}.

\begin{corollary}\label{th:curves}
The matrix $K_{n}(\rho)$ possesses a borderline type-1 eigenvalue iff $\rho \in B^{(1)}_n$, where $B^{(1)}_n$ is the closed curve given by
\begin{equation}\label{eq:Ls}
B^{(1)}_n=\left\{\rho \in \mathbb{C}: \rho=f^{(1)}_n(u)\text{\,\,for some\,\,} u\in[-\pi,\pi]\right\},
\end{equation}
in which $ f^{(1)}_n(u)$ is defined in (\ref{eq:rhosdef})--(\ref{eq:an}). Similarly, $K_{n}(\rho)$ possesses a borderline type-2 eigenvalue iff $\rho \in B^{(2)}_n$, where $ B^{(2)}_n$ is the closed curve given by
\begin{equation}\label{eq:Lc}
B^{(2)}_n=\left\{\rho \in \mathbb{C}: \rho=f^{(2)}_n(u) \text{\,\,for some\,\,} u\in[-\pi,\pi]\right\},
\end{equation}
in which $f^{(2)}_n(u)$ is defined in (\ref{eq:mupar})--(\ref{eq:rhocdef}).
\end{corollary}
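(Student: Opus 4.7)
The plan is to treat this corollary as essentially a reformulation of Theorem \ref{th:bordeigen} in the language of point sets/curves. The iff statement about membership of $\rho$ in $B_n^{(1)}$ is immediate in both directions. For the forward direction, if $K_n(\rho)$ possesses a borderline type-1 eigenvalue, then by Theorem \ref{th:bordeigen} there exists $u\in[-\pi,\pi]$ with $\rho=f_n^{(1)}(u)$, so $\rho$ belongs to the set on the right-hand side of \eqref{eq:Ls}. Conversely, if $\rho=f_n^{(1)}(u)$ for some $u\in[-\pi,\pi]$, then, setting $\lambda=b_n^{(1)}(u)$, Theorem \ref{th:bordeigen} produces a borderline type-1 eigenvalue $\lambda$ of $K_n(\rho)$. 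The argument for $B_n^{(2)}$ is identical, appealing to the type-2 half of Theorem \ref{th:bordeigen}.

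What remains is to justify the word \emph{closed curve}. For this I would invoke Lemma \ref{lemma:properties}(iii), which states that $f_n^{(k)}$ is continuous and $2\pi$-periodic; therefore $f_n^{(k)}$ descends to a continuous map from the circle $\mathbb{R}/2\pi\mathbb{Z}$ into $\mathbb{C}$, whose image is $B_n^{(k)}$. This already gives the topological meaning of a closed curve. The matching of endpoints $f_n^{(k)}(-\pi)=f_n^{(k)}(\pi)$ is also visible directly from Lemma \ref{lemma:properties}(iv).

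There is no real obstacle: the content is entirely in Theorem \ref{th:bordeigen}, and the corollary only rebrands its conclusion as the statement that $\rho$ lies on a parametrized closed curve. The only mild subtlety worth flagging (but not pursuing in the proof itself) is that ``closed curve'' here is meant in the parametric/topological sense; questions about whether $B_n^{(k)}$ is simple, smooth, or has cusps are addressed separately later in the paper via Lemma \ref{lemma:properties}(vi) and the analysis of where the derivative $df_n^{(k)}/du$ vanishes.
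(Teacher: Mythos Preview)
Your proposal is correct and matches the paper's own treatment: the paper presents Corollary~\ref{th:curves} explicitly as a ``restatement of Theorem~\ref{th:bordeigen}'' and gives no separate proof, so your derivation of both directions directly from Theorem~\ref{th:bordeigen} is exactly what is intended. Your added justification of the phrase ``closed curve'' via Lemma~\ref{lemma:properties}(iii)--(iv) is a nice touch that the paper leaves implicit.
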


Given $n$ and for $k=1$ or $k=2$, a point $\rho \in B^{(k)}_n$ can be determined as follows{.} Pick $u\in(-\pi,\pi]$; compute $g_{n}(u)$ from (\ref{eq:an}); solve the transcendental equation (\ref{eq:ba}) for $v=v(n,u)$; set $\mu(n,u)=u+iv(n,u)$; find $f_n^{(k)}(u)$ from (\ref{eq:rhosdef}) or (\ref{eq:rhocdef}); and set $\rho=f^{(k)}_n(u)$. Repeat the above process for many $u\in (-\pi,\pi]$ until the continuous curve $B^{(k)}_n$ is depicted. Fig. \ref{fig:twocurves} shows the curves thus generated for $n=5$ and $n=6$.

The properties listed below are apparent in the examples of Fig. \ref{fig:twocurves} and follow easily from Lemma \ref{lemma:lemma2}, or via Theorem \ref{th:bordeigen}. In particular, the points $\rho$ mentioned in (i)-(iv) of Proposition \ref{prop:realcase} are the four intersections of $B^{(1)}_n$ and $B^{(2)}_n$ with the real-$\rho$ axis.

\begin{proposition}\label{th:symmetries}
The borderline curves $B_n^{(1)}$ and $B_n^{(2)}$ exhibit the following properties{.}\\
(i) For $k=1$ and $k=2$, $B_n^{(k)}$ intersects the real axis exactly twice. The two points of intersection are the $f_n^{(k)}(0)$ and $f_n^{(k)}(\pi)$ given in (\ref{eq:rhosdef0}) and (\ref{eq:rhosdefpi}).\\
(ii) Both $B_n^{(1)}$ and $B_n^{(2)}$ are symmetric with respect to the real $\rho$-axis.\\
(iii) The union $B_n^{(1)}\cup B_n^{(2)}$ is symmetric with respect to the origin $\rho=0$.\\
(iv) For $n=3,5,7\ldots$, both $B_n^{(1)}$ and $B_n^{(2)}$ are symmetric with respect to the imaginary $\rho$-axis.\\
(v) For $n=2,4,6\ldots$,  $B_n^{(1)}$ and $B_n^{(2)}$ are mirror images of one another with respect to the imaginary $\rho$-axis.
\end{proposition}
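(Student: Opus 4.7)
The plan is to deduce all five items from two independent sources: the explicit parameterization of $B_n^{(k)}$ supplied by Theorem \ref{th:bordeigen}, together with the auxiliary facts in Lemma \ref{lemma:properties}, and the eigenpair symmetries collected in Lemma \ref{lemma:lemma2}. The key observation is that the borderline condition $|\lambda|=n$ is preserved under both $\lambda\mapsto\bar\lambda$ and the identity, so via Corollary \ref{th:curves} each assertion in Lemma \ref{lemma:lemma2} translates directly into a symmetry statement about $B_n^{(1)}$ and $B_n^{(2)}$.

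For item (i), I would invoke the chain of equivalences (\ref{eq:t10}) in Lemma \ref{lemma:properties}(ii) to conclude that $f_n^{(k)}(u)\in\mathbb{R}$ if and only if $u\in\{0,\pi\}$. Restricting to $u\in(-\pi,\pi]$ (which suffices by the $2\pi$-periodicity in Lemma \ref{lemma:properties}(iii)), the curve $B_n^{(k)}$ meets $\mathbb{R}$ exactly in the two points $f_n^{(k)}(0)$ and $f_n^{(k)}(\pi)$; the explicit values in (\ref{eq:rhosdef0})--(\ref{eq:rhosdefpi}) confirm that these two real numbers are distinct (since $\xi_n\geq 3$ for all $n\geq 2$). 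Item (ii) follows most cleanly from Lemma \ref{lemma:lemma2}(i): if $\lambda$ is a borderline type-$k$ eigenvalue of $K_n(\rho)$, then $\bar\lambda$ is a borderline type-$k$ eigenvalue of $K_n(\bar\rho)$, hence $\rho\in B_n^{(k)}$ iff $\bar\rho\in B_n^{(k)}$. Alternatively one can verify this directly from the identity $v(n,u)=v(n,-u)$ of Lemma \ref{lemma:properties}(i) together with (\ref{eq:t1})--(\ref{eq:t2}), which yield $f_n^{(k)}(-u)=\overline{f_n^{(k)}(u)}$.

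For item (iii), I would split on the parity of $n$ and apply Lemma \ref{lemma:lemma2}(ii)--(iii). When $n$ is odd, the type is preserved under $\rho\mapsto-\rho$, so each $B_n^{(k)}$ is \emph{individually} symmetric about the origin, and hence so is the union. When $n$ is even, the type is swapped, giving $\rho\in B_n^{(1)}\Leftrightarrow-\rho\in B_n^{(2)}$; again the union is symmetric about the origin. Item (iv) is then obtained by composing (ii) and (iii) for odd $n$: reflection about the real axis followed by reflection about the origin equals reflection about the imaginary axis, i.e.\ the map $\rho\mapsto-\bar\rho$ carries each $B_n^{(k)}$ to itself. Finally, for item (v), I would chain the even-$n$ case of (iii), namely $\rho\in B_n^{(1)}\Leftrightarrow -\rho\in B_n^{(2)}$, with the real-axis symmetry of $B_n^{(2)}$ from (ii), obtaining $\rho\in B_n^{(1)}\Leftrightarrow -\bar\rho\in B_n^{(2)}$, which is precisely the statement that the two curves are mirror images across the imaginary axis.

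None of the steps is genuinely difficult, since each item is essentially a restatement of an earlier lemma after passing through the parameterization of Theorem \ref{th:bordeigen}. The only step requiring mild care is (i), where both directions must be checked---that $f_n^{(k)}(0)$ and $f_n^{(k)}(\pm\pi)$ do lie in $\mathbb{R}$ and are distinct, and that no \emph{other} $u\in(-\pi,\pi]$ can produce a real $f_n^{(k)}(u)$---and both parts are furnished by (\ref{eq:t10}) of Lemma \ref{lemma:properties}(ii) combined with the explicit values in (\ref{eq:rhosdef0})--(\ref{eq:rhosdefpi}).
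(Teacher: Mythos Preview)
Your approach is correct and essentially coincides with the paper's: both invoke (\ref{eq:t10}) together with Lemma \ref{lemma:properties}(iv) for (i), and both derive (ii)--(v) from the eigenpair symmetries in Lemma \ref{lemma:lemma2} (the paper spells out only (v), combining Lemma \ref{lemma:lemma2}(i) and (iii) in a single step rather than composing (ii) with the even case of (iii) as you do, but this is the same argument). One slip: your claim ``$\xi_n\ge 3$ for all $n\ge 2$'' is false, since $\xi_n=(n+1)/(n-1)$ is decreasing with $\xi_2=3$, $\xi_3=2$, etc.; the distinctness of $f_n^{(k)}(0)$ and $f_n^{(k)}(\pi)$ follows instead from the fact that one is positive and the other negative by (\ref{eq:rhosdef0})--(\ref{eq:rhosdefpi}).
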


\begin{proof} (i) follows from (\ref{eq:t10}) and Lemma \ref{lemma:properties}(iv). 
The proofs of (ii)-(v) are very similar. Thus we only show (v), which amounts to 
\begin{equation*}
\rho\in B_n^{(1)} \Leftrightarrow -\bar{\rho}\in B_n^{(2)},\quad n=2,4,\ldots.
\end{equation*}
If $\rho\in B_n^{(1)}$, then $K_n(\rho)$ has a type-1 eigenvalue $\lambda$ with $|\lambda|=n$.
By Lemma \ref{lemma:lemma2}(i) and Lemma \ref{lemma:lemma2}(iii), $K_n(-\bar{\rho})$ possesses the type-2 eigenvalue {$\bar{\lambda}$}, and $-\bar{\rho}\in B_n^{(2)}$ follows from {$|\bar{\lambda}|=|\lambda|=n$}. The converse can be shown in the same way. 

For an alternative proof via Theorem \ref{th:bordeigen}, note that (v) is tantamount to 
\begin{equation*}
f_n^{(1)}(\pi-u)=-\overline{f_n^{(2)}(u)},\quad n=2,4,\ldots,
\end{equation*}
which is easily verified using Lemma \ref{lemma:properties}(i).
\end{proof}

In Fig. \ref{fig:twocurves}, $B^{(1)}_n$ and $B^{(2)}_n$ intersect one another $2(n-1)$ times, but neither closed curve exhibits \textit{self-} intersections. {We believe this is true for general $n$.}

\begin{conjecture}\label{th:jordan} The closed curves
$B^{(1)}_n$ and $B^{(2)}_n$ are Jordan curves. In other words, for $k=1$ and $k=2$ we have $f^{(k)}_n(u)\ne f^{(k)}_n(u^{\prime})$ whenever $u\ne u^{\prime}$ ($-\pi<u,u^{\prime}\le\pi$).
\end{conjecture}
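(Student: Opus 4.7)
My strategy is to first use the symmetries of the parametrization to reduce the Jordan property to injectivity of $f_n^{(k)}$ on the single half-period $(0,\pi)$, and then to attack the reduced problem by leveraging what we know about the eigenvalue structure of $K_n(\rho)$.

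\emph{Reduction.} A short manipulation of \eqref{eq:rhosdef} and \eqref{eq:rhocdef}, together with $v(n,-u)=v(n,u)$ from Lemma \ref{lemma:properties}(i) and the oddness of $\sin$, yields the conjugation identity $f_n^{(k)}(-u)=\overline{f_n^{(k)}(u)}$. By Lemma \ref{lemma:properties}(v), $f_n^{(k)}((0,\pi))$ lies strictly in the open lower half-plane and $f_n^{(k)}((-\pi,0))$ strictly in the open upper half-plane, so these two arcs cannot meet; by Lemma \ref{lemma:properties}(iv), the real values $f_n^{(k)}(0)$ and $f_n^{(k)}(\pm\pi)$ are distinct members of $\{\pm1,\pm\xi_n\}$, and neither arc hits these endpoints in its interior. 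Hence any failure of injectivity must come from two distinct values of $u$ lying both in $(0,\pi)$; the analogous statement on $(-\pi,0)$ follows automatically from the conjugation identity. The task therefore reduces to showing that $f_n^{(k)}$ is injective on $(0,\pi)$.

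\emph{Case split on $(0,\pi)$.} Suppose for contradiction that $f_n^{(k)}(u_1)=f_n^{(k)}(u_2)=:\rho^*$ with $0<u_1<u_2<\pi$. By Lemma \ref{lemma:properties}(ii) the values $\mu_j=\mu(n,u_j)$ are distinct, each solving \eqref{eq:eigen1eq}, and by Lemma \ref{lemma:lemma3} both $b_n^{(k)}(u_1)$ and $b_n^{(k)}(u_2)$ are type-$k$ borderline eigenvalues of $K_n(\rho^*)$. Two mutually exclusive subcases arise. If $b_n^{(k)}(u_1)=b_n^{(k)}(u_2)$, then $\rho^*$ yields a repeated borderline eigenvalue, which by Corollary \ref{th:criticaldef}(ii) must equal $-n$, so $\rho^*$ is a borderline/double point; at such a $\mu$ equation \eqref{eq:eigen2eq} holds alongside \eqref{eq:eigen1eq}, the numerator in \eqref{eq:t32} vanishes, and $B_n^{(k)}$ has a cusp at $\rho^*$. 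To preclude a genuine self-crossing one must verify that for each borderline/double $\rho^*$ the simultaneous system \eqref{eq:eigen1eq}--\eqref{eq:eigen2eq} has exactly one solution $\mu$ in the strip $\{u+iv:0<u<\pi,\ v\ge 0\}$, which should follow from an explicit resultant computation eliminating $\rho$ between the two equations. If instead $b_n^{(k)}(u_1)\ne b_n^{(k)}(u_2)$, then $K_n(\rho^*)$ carries two distinct type-$k$ borderline eigenvalues, and this possibility must be ruled out.

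\emph{Main obstacle.} The second subcase is the essential difficulty. A plausible route is an argument-principle / polynomial-count argument: write the type-$k$ eigenvalues of $K_n(\rho)$ as the roots of an explicit polynomial in $\lambda$ of degree $\lfloor n/2\rfloor$ or $\lceil n/2\rceil$ (available from the theory of \cite{Fik2018}) and show that for every $\rho\in\mathbb{C}$ at most one of these roots satisfies $|\lambda|=n$, perhaps by monotonicity in the parameter $v\ge 0$ implicit in \eqref{eq:ba}. A more geometric alternative is to compute the total rotation of the tangent vector \eqref{eq:t32} as $u$ traverses $[-\pi,\pi]$: if it equals $\pm2\pi$ (supplemented by a local analysis at the finitely many cusps, where the numerator of \eqref{eq:t32} vanishes), then $B_n^{(k)}$ has turning number $\pm1$, and combined with the half-plane confinement established in the reduction step this forces simplicity. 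I expect the turning-number route to be more tractable, but neither approach is immediate from the identities assembled so far; this global control is precisely what is missing, and is presumably why the statement has been left as a conjecture.
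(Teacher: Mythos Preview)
The paper does not prove this statement: it is stated as a Conjecture, and the authors write explicitly that they ``were not able to prove Conjecture~\ref{th:jordan}'' and only ``tested it numerically in a number of ways.'' There is therefore no proof in the paper to compare your proposal against.

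Your proposal is not a proof either, and you say so yourself in the final paragraph. The reduction step is clean and correct: the conjugation identity $f_n^{(k)}(-u)=\overline{f_n^{(k)}(u)}$ together with Lemma~\ref{lemma:properties}(iv)--(v) does reduce the question to injectivity on $(0,\pi)$. The case split is also the natural one. In the first subcase, however, the step ``$b_n^{(k)}(u_1)=b_n^{(k)}(u_2)$ with $u_1\ne u_2$ implies the eigenvalue is algebraically repeated'' is not immediate from anything stated in this paper; it is true in the polynomial framework of \cite{Fik2018} (distinct admissible $\mu$'s index distinct roots of the characteristic factor), but you would need to import and cite that fact explicitly. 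Even granting it, you then need the uniqueness of the solution $\mu$ to the simultaneous system \eqref{eq:eigen1eq}--\eqref{eq:eigen2eq} in the strip, which you defer to ``an explicit resultant computation'' without carrying it out.

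The second subcase is, as you diagnose, the real obstruction: ruling out two \emph{distinct} type-$k$ borderline eigenvalues at a single $\rho$. Neither of your two proposed routes (polynomial root-count with monotonicity in $v$, or a global turning-number computation for the tangent \eqref{eq:t32} supplemented by local cusp analysis) is executed, and both would require substantial new work beyond the identities assembled in the paper. Your closing sentence is accurate: this missing global control is precisely the gap, and it is why the authors left the statement as a conjecture.
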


Although we were not able to prove Conjecture \ref{th:jordan}, we tested it numerically in a number of ways. For example, in all cases we tried, the difference $f^{(k)}_n(u)-f^{(k)}_n(u^{\prime})$ ($u\ne u^{\prime}$) was nonzero (as expected, the difference became small upon approaching a {borderline/double} eigenvalue). In this manner, we checked our  conjecture directly. We also checked it in other (indirect) ways, to be described.

If Conjecture \ref{th:jordan} is indeed true, borderline eigenvalues of a particular type are {unique, as formulated by the conditional proposition that follows.}

\begin{conditionalproposition}\label{th:jordan1}
Let $\rho\in\mathbb{C}$, let $k=1$ or $k=2$, and assume that 
Conjecture \ref{th:jordan}  is true. Then  $K_n(\rho)$ can possess at most one type-$k$ (simple or double) borderline eigenvalue. 
\end{conditionalproposition}

\begin{proof}
Suppose that $\lambda$ and $\lambda^{\prime}$ are both type-$k$ eigenvalues of $K_n(\rho)$. By Theorem \ref{th:bordeigen} there exist $u$ and $u^{\prime}$ in $(-\pi,\pi]$ such that
\begin{equation*}
\rho=f^{(k)}_n (u),\quad \lambda=b^{(k)}_n(u),\quad \rho=f^{(k)}_n (u^{\prime}), \quad \lambda^{\prime}=b^{(k)}_n(u^{\prime}).
\end{equation*}
It follows from Conjecture \ref{th:jordan} that $u=u^{\prime}$ so that $\lambda=\lambda^{\prime}$. (Lemma \ref{lemma:lemma3} allows for a simple or double eigenvalue $\lambda$.) 
\end{proof}

Conditional Proposition \ref{th:jordan1} was corroborated by many numerical tests.

\begin{figure}
\begin{center}
\includegraphics[scale=0.3]{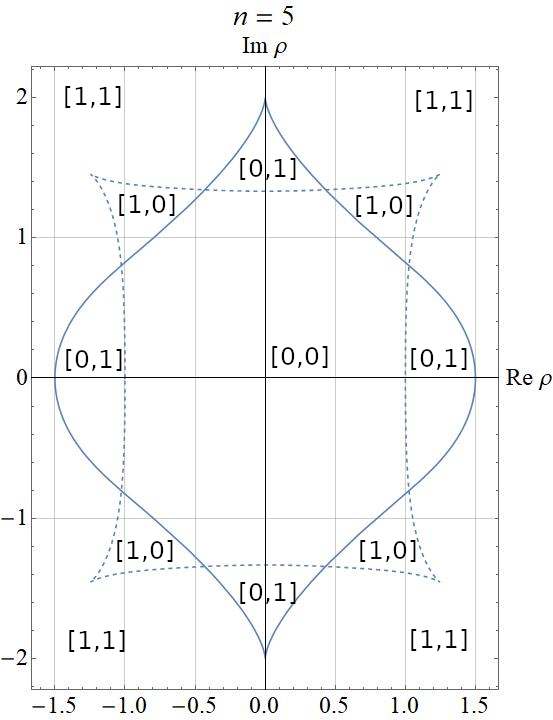}
\includegraphics[scale=0.315]{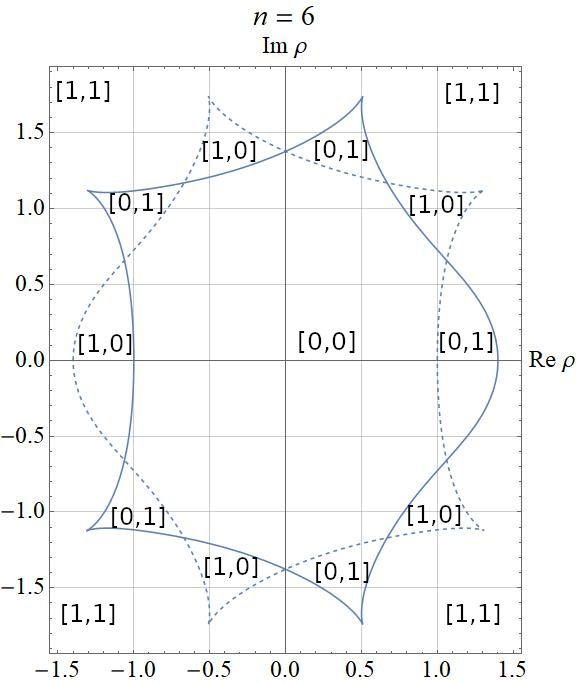}
\end{center}
\caption{Closed curves $B_n^{(1)}$ (solid lines) and $B_n^{(2)}$ (dashed lines) for $n=5$ (left) and $n=6$ (right). The pairs of integers ($[1,0]$, $[0,0]$, etc.) denote the numbers of type-1 and type-2 extraordinary eigenvalues within the regions formed by $B_n^{(1)}$ and $B_n^{(2)}$.} \label{fig:twocurves}
\end{figure}
\section{Double eigenvalues and borderline-curve singularities}

\subsection{Locations of curve singularities}

In Fig. \ref{fig:twocurves}, it is evident that $\rho=i2$ is a singular point of $B^{(1)}_5$, that $B^{(2)}_6$ presents a singularity in each of the four quadrants (the actual values of $\rho$ are $1.31\pm i1.12$ and {$0.51\pm i1.74$}), etc. For any $n$, Theorem \ref{theorem:double1} will enable {\it a priori} determination of \textit{all} singularities in $\mathbb{C}\setminus\mathbb{R}$. 
Our theorem uses the usual definition \cite{Kreyszig} pertaining to parametrized curves, namely that singularities occur whenever $df_n^{(k)}(u)/du=0$. In this manner, Theorem \ref{theorem:double1} will demonstrate a one-to-one correspondence between the aforementioned singularities and {the borderline/double eigenvalues of Definition {\ref{def:borddef2}}.} To begin with, Lemma \ref{lemma:properties}(vi) and $v(n,u)\in\mathbb{R}$ {imply an auxiliary result.}

\begin{lemma}\label{lemma:derivative}
Let $u\in(-\pi,0)\cup (0,\pi)$, let $\mu(n,u)$ be determined via (\ref{eq:mupar})--(\ref{eq:an}), and let $k=1$ or $k=2$. Then  $df^{(k)}_n(u)/du=0$ iff $u$ is such that
\begin{equation}\label{eq:ddouble}
\frac{\sin[n\mu(n,u)]}{\sin\mu(n,u)}=\pm n,\quad \quad k=\left\{{1 \atop 2}\right\}.
\end{equation}   
\end{lemma}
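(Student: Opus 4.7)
The plan is to read off the conclusion directly from the derivative formula (\ref{eq:t32}) of Lemma \ref{lemma:properties}(vi), after verifying that none of the incidental factors in that formula can vanish on the interval under consideration.

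First I would recall the formula
\begin{equation*}
\frac{df_n^{(k)}(u)}{du}=\frac{-n\sin[\mu(n,u)]\pm\sin[n\mu(n,u)]}{1\mp\cos[(n-1)\mu(n,u)]}\,\frac{d\mu(n,u)}{du},
\end{equation*}
with upper (lower) sign for $k=1$ ($k=2$), valid for $u\in(-\pi,0)\cup(0,\pi)$. The task then reduces to showing that on this interval the factor $d\mu(n,u)/du$ and the denominator $1\mp\cos[(n-1)\mu(n,u)]$ are both nonzero, so that $df_n^{(k)}/du=0$ is equivalent to the vanishing of the numerator $-n\sin[\mu(n,u)]\pm\sin[n\mu(n,u)]$.

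For $d\mu/du$, I would invoke (\ref{eq:t31}): since $v(n,u)\in\mathbb{R}$, we have $d\mu/du=1+i\,dv/du$, whose real part is $1$, hence $d\mu/du\ne 0$. For the denominator, I would use the observation already made at the close of the proof of Lemma \ref{lemma:properties}(vi): the equivalences in (\ref{eq:t10}) together with $u\in(-\pi,\pi)\setminus\{0\}$ force $\mu(n,u)\notin\mathbb{R}$, and in particular $(n-1)\mu(n,u)\notin\mathbb{R}$, so $\cos[(n-1)\mu(n,u)]\ne \pm 1$ and the denominator is nonzero. The same argument gives $\sin[\mu(n,u)]\ne 0$, which is what is needed to rewrite the numerator equation $-n\sin\mu\pm\sin(n\mu)=0$ as (\ref{eq:ddouble}), with the signs matching the stated convention.

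There is really no serious obstacle: the lemma is essentially a bookkeeping statement extracting the zero-set of the numerator in (\ref{eq:t32}). The only delicate point is being careful about the sign convention in (\ref{eq:t32}) versus (\ref{eq:ddouble}) and explicitly noting that all factors being divided out are nonzero on $(-\pi,0)\cup(0,\pi)$; both of these follow transparently from Lemma \ref{lemma:properties}(ii) and Lemma \ref{lemma:properties}(vi).
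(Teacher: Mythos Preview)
Your proposal is correct and follows essentially the same route as the paper, which simply states that the lemma follows from Lemma~\ref{lemma:properties}(vi) together with $v(n,u)\in\mathbb{R}$. You have merely made explicit the two nonvanishing checks (for $d\mu/du$ via its real part equal to~$1$, and for the denominator and $\sin\mu$ via $\mu\notin\mathbb{R}$) that the paper leaves implicit.
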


Our theorem follows by translating the conditions on $u$ into conditions on $\rho$ and invoking the definition of a curve singularity{.}

\begin{theorem}\label{theorem:double1}
Let $k=1$ or $k=2$. The point $\rho_0\in \mathbb{C}\setminus \mathbb{R}$ is a singularity of $B^{(k)}_n$ iff $K_n(\rho_0)$ possesses a type-$k$ {borderline}/double eigenvalue.
\end{theorem}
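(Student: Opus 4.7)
The plan is to translate both implications via the parametrization $u \mapsto (f^{(k)}_n(u), b^{(k)}_n(u))$ together with the characterization of borderline/double eigenvalues in Corollary \ref{th:criticaldef}. By the stated definition of a curve singularity, $\rho_0$ is singular on $B^{(k)}_n$ iff there exists $u_0 \in (-\pi,\pi]$ with $f^{(k)}_n(u_0) = \rho_0$ and $df^{(k)}_n(u)/du\bigl|_{u=u_0} = 0$. On the eigenvalue side, because $\rho_0 \in \mathbb{C}\setminus\mathbb{R}$ (in particular $\rho_0 \ne 0,\pm 1,\pm\xi_n$), Corollary \ref{th:criticaldef} makes ``$K_n(\rho_0)$ has a type-$k$ borderline/double eigenvalue'' equivalent to ``$-n$ is a type-$k$ eigenvalue of $K_n(\rho_0)$.'' So the theorem reduces to showing that, for $\rho_0 \notin \mathbb{R}$, the parametric derivative $df^{(k)}_n/du$ vanishes at some $u_0$ with $f^{(k)}_n(u_0) = \rho_0$ iff $b^{(k)}_n(u_0) = -n$ at some such $u_0$.

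For the forward direction I would let $u_0$ witness the singularity. Because $\rho_0 \notin \mathbb{R}$, Lemma \ref{lemma:properties}(ii) excludes $u_0 \in \{0, \pi\}$, so $u_0 \in (-\pi,0)\cup(0,\pi)$ and Lemma \ref{lemma:derivative} applies, yielding $\sin[n\mu(n,u_0)]/\sin\mu(n,u_0) = \pm n$ with the sign dictated by $k$. Matching this against the definitions of $b^{(k)}_n$ in (\ref{eq:rhosdef}) and (\ref{eq:rhocdef}), the upper sign for $k=1$ and the lower sign for $k=2$ both collapse to $b^{(k)}_n(u_0) = -n$. Hence $-n$ is a type-$k$ eigenvalue of $K_n(\rho_0)$, and Corollary \ref{th:criticaldef}(iii) upgrades this to borderline/double.

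The converse runs the same chain in reverse. If $K_n(\rho_0)$ has a type-$k$ borderline/double eigenvalue, Corollary \ref{th:criticaldef} forces that eigenvalue to equal $-n$, and Theorem \ref{th:bordeigen} produces $u_0 \in (-\pi,\pi]$ with $\rho_0 = f^{(k)}_n(u_0)$ and $b^{(k)}_n(u_0) = -n$. Again $\rho_0 \notin \mathbb{R}$ and Lemma \ref{lemma:properties}(ii) place $u_0$ in $(-\pi,0)\cup(0,\pi)$, where the identity $b^{(k)}_n(u_0) = -n$ rewrites as (\ref{eq:ddouble}); Lemma \ref{lemma:derivative} then delivers $df^{(k)}_n/du\bigl|_{u_0} = 0$, so $\rho_0$ is a singularity of $B^{(k)}_n$.

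The argument is essentially bookkeeping, since Lemma \ref{lemma:derivative} has already tied vanishing of the parametric derivative to $b^{(k)}_n = \pm n$, and Corollary \ref{th:criticaldef} has already identified $\lambda = -n$ (off the exceptional set of $\rho$-values) with being borderline/double. The only place a slip is possible is the sign matching between the ``$\pm n$'' in (\ref{eq:ddouble}) and the type-dependent sign appearing in $b^{(k)}_n$; I would verify in one line that for both $k=1$ and $k=2$ the conclusion collapses to $b^{(k)}_n = -n$, which is the single value appearing in Corollary \ref{th:criticaldef}. Equally, one should note that the exclusion $\rho_0 \notin \mathbb{R}$ (rather than the larger exclusion in Theorem \ref{th:double}) suffices here precisely because $\pm\xi_n, \pm 1, 0$ are all real, so they are automatically absent from the hypothesis.
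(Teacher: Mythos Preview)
Your argument is correct and follows essentially the same route as the paper's own proof: both directions hinge on Lemma~\ref{lemma:derivative} to equate the vanishing of $df_n^{(k)}/du$ with $b_n^{(k)}(u_0)=-n$, and on Corollary~\ref{th:criticaldef} to identify that value with a borderline/double eigenvalue. The only cosmetic difference is that the paper invokes Proposition~\ref{th:symmetries}(i) (rather than Lemma~\ref{lemma:properties}(ii)) to rule out $u_0\in\{0,\pm\pi\}$, and in the converse it passes through Lemma~\ref{lemma:lemma3} before recognizing $\mu=\mu(n,u_0)$ instead of citing Theorem~\ref{th:bordeigen} directly.
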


\begin{proof} 
Suppose that $\rho_0\in \mathbb{C}\setminus\mathbb{R}$ is a singular point of $B^{(k)}_n$, so that $\rho_0=f^{(k)}_n(u_0)$ where $d f^{(k)}_n(u_0)/du=0$. Proposition \ref{th:symmetries}(i) gives $\rho_0\ne \pm \xi_n$, $u_0\ne 0$, and $u_0 \ne \pm\pi$. Lemma \ref{lemma:derivative} then implies (\ref{eq:ddouble}). We have thus found a $\mu$ for which equations (\ref{eq:lambda}) and (\ref{eq:eigen1eq}) are satisfied, with the type-$k$ eigenvalue $\lambda$ equal to $-n$. By  {Corollary} \ref{th:criticaldef}(iii), this means that $K_n(\rho_0)$ possesses a type-$k$ {borderline/double} eigenvalue.

{Conversely, suppose that $K_n(\rho_0)$ possesses a type-$k$ borderline/double eigenvalue. Then $\rho_0\notin\mathbb{R}$ and $\lambda=-n$ by Corollary} \ref{th:criticaldef}, so that (\ref{eq:lambda}) and (\ref{eq:eigen1eq}) are satisfied for some $\mu$. As the magnitude of the eigenvalue $\lambda$ is $n$, we must have $\mu=\mu(n,u_0)$ for some nonzero $u_0\in (-\pi,\pi)$, where $\mu(n,u_0)$ is found via (\ref{eq:mupar})-(\ref{eq:an}). Accordingly, (\ref{eq:lambda}) is the same as (\ref{eq:ddouble}). Lemma \ref{lemma:derivative} then gives $df^{(k)}_n(u_0)/du=0$, completing our proof.
\end{proof}

\begin{figure}
\begin{center}
\includegraphics[scale=0.45]{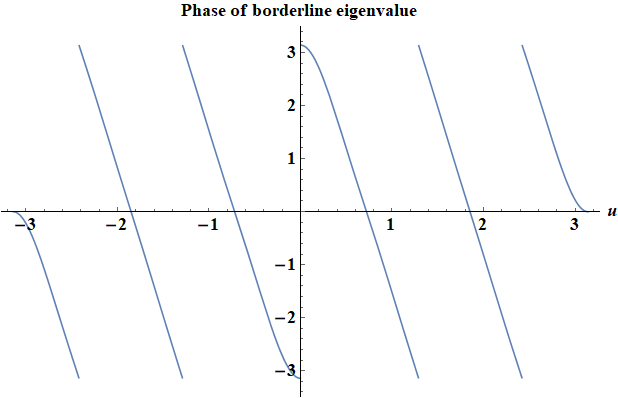}
\end{center}
\caption{Phase (principal value) of type-1 borderline eigenvalue $\lambda=b_6^{(1)}(u)$ as function of $u$ for $n=6$. For $u \ne 0$, the four jumps signal the appearance of a {borderline/double} eigenvalue {equal to $-6$.} The jump at $u=0$ corresponds to a non-repeated eigenvalue equal to $-6$.} \label{fig:phase}
\end{figure}

For $n=6$, Fig. \ref{fig:phase} shows the (principal value of the) phase of the type-1 eigenvalue $\lambda=b_n^{(1)}(u)$ [see (\ref{eq:rhosdef})] as a function of $u$. This is the phase of the borderline eigenvalue as we move along the borderline curve  $B_n^{(1)}$. Discontinuities occur whenever $u=u_0$ is such that the phase jumps by $2\pi$, so that $b_n^{(1)}(u_0)=-n=-6$. By {Corollary} \ref{th:criticaldef}(iii), this means that the borderline eigenvalue becomes a {borderline}/double eigenvalue, with the single exception of the case $u_0=0$, corresponding to $\rho_0=\xi_n=\xi_6=7/5$ [see (\ref{eq:rhosdef0}), Remark \ref{remark: exceptions}, and the solid line in Fig. \ref{fig:twocurves}, right]. When $u_0=0$ the borderline eigenvalue equals $-n$, but {this eigenvalue is not a borderline/double one.} In all other cases, Theorem \ref{theorem:double1} tells us that $\rho_0=f_n^{(1)}(u_0)$ is a singularity of $B_n^{(1)}$. These cases correspond to the four singularities, one in each quadrant, in the solid line in the right Fig. \ref{fig:twocurves}.

\subsection{Cusp-like nature of curve singularities; local bisector}

Let $\rho_0$ be any of the type-1 or type-2 singularities discussed in Theorem \ref{theorem:double1}. Near $\rho=\rho_0$, the two coalescing eigenvalues $\lambda$ can be expanded into a Puiseux series \cite{Lancaster}. The first two terms are
\begin{equation*}
\lambda \cong -n + \eta_0 (\rho-\rho_0)^{1/2},
\end{equation*}
where the square root is double-valued.  Let $\rho-\rho_0=r \, e^{i \theta}$ ($r\ll 1$) and $\eta_0=|\eta_0|\, e^{i\psi}$ so that
\begin{equation*}
|\lambda|^2\cong n^2 + |\eta_0|^2 r - 2n|\eta_0| \sqrt{r}\cos\left(\frac{\theta}{2}+ \psi\right).
\end{equation*}
When $\rho$ is on the level curve $B_n^{(k)}$ we have $|\lambda|^2=n^2$ so that
\begin{equation}\label{eq:cardioideq}
r=r(\theta)\cong\frac{2n^2}{|\eta_0|^2}\left[1 + \cos(\theta + 2\psi)\right],\quad r\ll 1.
\end{equation}
Eq. \eqref{eq:cardioideq} is a polar equation for a cardioid \cite{Lawrence} that has a cusp {at the origin $r=0$, or at $\rho=\rho_0$.} This cardioid is bisected by the ray $\theta=\pi-2\psi$ which is tangent, at $\rho_0$, to the two arcs of the cardioid. Near $\rho=\rho_0$, the cardioid describes the local behavior of $B_n^{(k)}$. We thus use the terms \textit{cusp-like singularity} for any of the $\rho_0$ of Theorem \ref{theorem:double1}, and \textit{local bisector} for the tangent ray passing through $\rho_0$. 

{Since cardioids are Jordan curves, the} above result verifies Conjecture \ref{th:jordan} locally near any cusp-like singularity, i.e., for all $u$ and $u^{\prime}$ such that $f_n^{(k)}(u)$ and $f_n^{(k)}(u^{\prime})$ are sufficiently close to the singularity.

It is possible to determine the $k$-dependent parameters $|\eta_0|$ and $\psi$. Without dwelling on this, we mention that: (i) $B_n^{(1)}$ and $B_n^{(2)}$ each have two cusps on the imaginary axis when $n=5,9,13,\ldots$ and $n=3,7,11,\ldots$, respectively, see the example in the left Fig.~\ref{fig:twocurves}; (ii) the corresponding local bisectors also lie on the imaginary axis. 
\begin{figure}
\begin{center}
\includegraphics[scale=0.35]{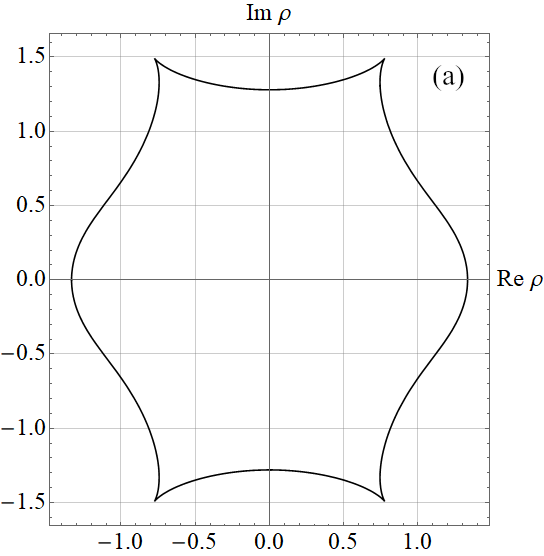}
\includegraphics[scale=0.31]{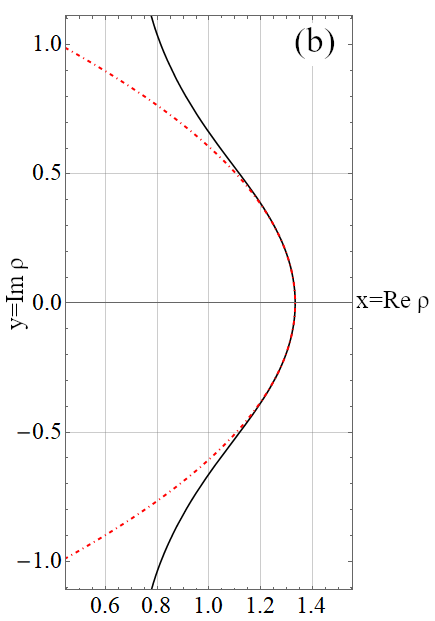}
\end{center}
\caption{{\bf (a)} Closed curve $B_n^{(1)}$ in the $\rho$-plane for $n=7$.  {\bf (b)} Focus on the region around $\xi_7=4/3$ of the curve $B_n^{(1)}$ (solid line). The dot-dashed curve is the parabola of (\ref{eq:b1approxcar}) for $n=7$.} \label{fig:par}
\end{figure}

\subsection{Parabolic behaviors near real axis} 

Recall that there are no {borderline}/double eigenvalues when $\rho\in\mathbb{R}$, and that Sections 5.1 and 5.2 left out the curve-intersections with the real  axis. We thus proceed to Taylor-expand $f^{(k)}_n(u)$ about $u=0$ (formulas for $u=\pi$ then follow from the symmetries listed in Proposition \ref{th:symmetries}). To carry this out, we assume a small-$u$ expansion of $v(n,u)$ of the form
\begin{equation*}
v(n,u)=\alpha u\left[1+\beta u^2+\gamma u^4\right],\quad u\rightarrow 0+0,\quad \alpha>0,
\end{equation*}
substitute into the left-hand side of (\ref{eq:ba}), Taylor-expand the right-hand side $g_n(u)$ using (\ref{eq:an}), equate the coefficients of the resulting powers of $u$ (namely of $u^4$, $u^6$, and $u^8$), and solve for $\alpha$, $\beta$, and $\gamma$. The  result of this procedure is
\begin{equation}\label{eq:vapprox}
v(n,u)=u\left[1-\frac{n^2+1}{15} u^2+\frac{(n^2+1)^2}{150} u^4+O(u^{6})\right],\quad u\rightarrow 0+0.
\end{equation}
In this manner, we have determined an approximation to $v(n,u)$ that satisfies (\ref{eq:ba}) to $O(u^{10})$. The number of terms in (\ref{eq:vapprox}) is sufficient to obtain (nonzero) small-$u$ approximations  to the real and imaginary parts of $f^{(1)}_n(u)-\xi_n$ and $f^{(2)}_n(u)-1$: Substitution of (\ref{eq:vapprox}) into (\ref{eq:rhosdef}) and (\ref{eq:rhocdef}) gives
\begin{equation}\label{eq:ttt1}
f_n^{(1)}(u)=\xi_n\left\{1-\frac{in}{3}u^2-n\left[
\frac{n^2+5n+1}{90}-i\frac{n^2+1}{45}\right]u^4+O(u^{6})\right\},
\quad u\rightarrow 0+0
\end{equation}
and
\begin{equation}\label{eq:ttt2}
f_n^{(2)}(u)=1-inu^2+n\left[
\frac{n^2-5n+1}{10}+i\frac{n^2+1}{15}\right]u^4+O(u^{6}),
\quad u\rightarrow 0+0.
\end{equation}

Let $x={\rm Re} \rho$ and $y={\rm Im} \rho$.  Consistent with Lemma \ref{lemma:properties}(v), the approximations to $y$ in (\ref{eq:ttt1}) and (\ref{eq:ttt2}) are negative, corresponding to the lower-half plane. By Proposition \ref{th:symmetries}(ii), we can obtain an upper-half plane approximation to $y$ by using the opposite sign. Consequently, our final parametrized (small-$u$) results for the behaviors of $B_n^{(k)}$ near the positive real semi-axis are
\begin{equation}\label{eq:b1approxpar}
B_n^{(1)}:\quad x-\xi_n\simeq -\xi_n \frac{n(n^2+5n+1)}{90}u^4,\quad y \simeq \pm \xi_n\frac{n}{3}u^2,\qquad \rho\rightarrow \xi_n,
\end{equation}
\begin{equation}\label{eq:b2approxpar}
B_n^{(2)}:\quad x-1\simeq \frac{n(n^2-5n+1)}{10}u^4,\quad y \simeq \pm nu^2,\qquad \rho\rightarrow 1,
\end{equation}
where, as already mentioned, the upper signs correspond to $u<0$. In Cartesian coordinates, it follows that our curves \textit{locally behave like parabolas} according to
\begin{equation}\label{eq:b1approxcar}
B_n^{(1)}:\quad y^2\simeq \frac{10n\xi_n }{n^2+5n+1}(\xi_n-x)\qquad x\rightarrow \xi_n-0,
\end{equation}
\begin{equation}
B_n^{(2)}:\quad y^2\simeq 
\begin{cases}
\displaystyle\frac{10n}{|n^2-5n+1|}(1-x),\quad n=2,3,4;\quad x\rightarrow 1-0\\
\displaystyle\frac{10n}{n^2-5n+1}(x-1),\quad n=5,6,7,\ldots\quad x\rightarrow 1+0.
\end{cases}
\end{equation}
Fig. \ref{fig:par} depicts representative numerical results.

\section{Extraordinary eigenvalues}

This section deals with extraordinary type-1 (or type-2) eigenvalues. Some of our results assume that that Conjecture \ref{th:jordan} is true, while others do not. 

\subsection{On the number of extraordinary eigenvalues}

If the closed curves  $B_n^{(1)}$ and $B_n^{(2)}$ are indeed Jordan (Conjecture \ref{th:jordan}), then each curve separates the complex-$\rho$ plane into two open and pathwise-connected components, namely an interior and an exterior. 

\begin{conditionalproposition}\label{th:extraordinary}
Assume that 
Conjecture \ref{th:jordan} is true, so that $B_n^{(k)}$ possesses an interior $I_n^{(k)}$ and an exterior $E_n^{(k)}$ ($k=1,2$).
Let $\rho\in\mathbb{C}$,  and let $j_n^{(k)}(\rho)$ denote the number, counting multiplicities, of type-$k$ extraordinary eigenvalues of $K_n(\rho)$. Then all extraordinary eigenvalues are non-repeated, and 
\begin{equation}\label{eq:numberee}
j_n^{(k)}(\rho)=
\begin{cases}
0 , {\rm \,if\,\, } \rho\in I_n^{(k)} \\
0 , {\rm \,if\,\, } \rho\in B_n^{(k)} \\
1, {\rm \,if\,\, } \rho\in E_n^{(k)}   
\end{cases}.
\end{equation}
\end{conditionalproposition}
\begin{proof}
Imagine moving along any (continuous) path in the complex $\rho$-plane. Along the path, the eigenvalues of $K_n(\rho)$ vary continuously. Consequently, the integer-valued function $j_n^{(k)}(\rho)$ can be discontinuous at $\rho=\rho_0$ only if some eigenvalue of $K_n(\rho_0)$ is borderline, i.e., some eigenvalue's magnitude assumes the value $n$ when $\rho=\rho_0$.  According to Corollary \ref{th:curves}, this can happen only if $\rho_0\in B_n^{(k)}$. Therefore, $j_n^{(k)}(\rho)$ remains unaltered along any path lying entirely within $I_n^{(k)}$. Since any two points in $I_n^{(k)}$ can be joined by such a path (lying entirely within $I_n^{(k)}$), $j_n^{(k)}(\rho)$ is constant within $I_n^{(k)}$. Similarly, $j_n^{(k)}(\rho)$ is constant within $E_n^{(k)}$. We thus proceed to find $j_n^{(k)}(\rho)$ for one point $\rho$ within $I_n^{(k)}$ and for one point $\rho$ within $E_n^{(k)}$. 

By Proposition \ref{th:symmetries}(i), the positive real semi-axis intersects $B_n^{(1)}$ at $\rho=\xi_n$ and at no other point; and it intersects $B_n^{(2)}$ at $\rho=1$ and at no other point. We have thus found a point $\rho$ in each region,
\begin{equation}\label{eq:ee1}
0\in I_n^{(1)},\qquad 0\in I_n^{(2)},\qquad \xi_n+1\in E_n^{(1)},\qquad 2\in E_n^{(2)},
\end{equation}
and Table 1 of Proposition \ref{prop:realcase} gives the corresponding $j_n^{(k)}(\rho)$ as
\begin{equation}\label{eq:ee2}
j_n^{(1)}(0)=0,\qquad j_n^{(2)}(0)=0,\qquad
 j_n^{(1)}(\xi_n+1)=1,\qquad j_n^{(2)}(2)=1.
\end{equation}
Both for $k=1$ and $k=2$, we have now shown  that  $j_n^{(k)}(\rho)=0$ and $j_n^{(k)}(\rho)=1$ whenever $\rho\in I_n^{(k)}$ and $\rho\in E_n^{(k)}$, respectively.
 
Now consider a path that lies entirely within  $I_n^{(k)}$, with the single exception of an endpoint that lies on $B_n^{(k)}$. Such a path will also leave $j_n^{(k)}(\rho)$ unaltered, because borderline eigenvalues are ordinary eigenvalues by definition. Thus $j_n^{(k)}(\rho)=0$ for all $\rho\in B_n^{(k)}$. 

The integer $j_n^{(k)}(\rho)$, which equals the number of type-$k$ extraordinary eigenvalues of $K_n(\rho)$, counts double type-$k$ eigenvalues twice by definition.  As $j_n^{(k)}(\rho)$ is at most 1, all extraordinary eigenvalues are non-repeated.
\end{proof}

\begin{remark}\label{remark: eeremark}
Suppose that Conjecture \ref{th:jordan} is not true, and that there exist $B_n^{(k)}$ that separate the plane into more than two components, with all components open, disjoint, and pathwise connected. For any such $B_n^{(k)}$, let $E_n^{(k)}$ denote the component that extends to infinity and let $O_n^{(k)}$ denote the component that includes the origin. Then it is still true that
\begin{equation}\label{eq:numbereenojordan}
j_n^{(k)}(\rho)=
\begin{cases}
0 , {\rm \,if\,\, } \rho\in O_n^{(k)} \\
1, {\rm \,if\,\, } \rho\in E_n^{(k)},   
\end{cases} 
\end{equation}
where, as before, $j_n^{(k)}(\rho)$ denotes the number of extraordinary eigenvalues of $K_n(\rho)$. 
To prove (\ref{eq:numbereenojordan}), it is only necessary to replace
$I_n^{(k)}$ by $O_n^{(k)}$, $\xi_n+1$ by $\rho^{(1)}$, and $2$ by $\rho^{(2)}$ in the relevant part of the proof of Conditional Proposition \ref{th:extraordinary}, where $\rho^{(1)}$ and $\rho^{(2)}$ are sufficiently large positive numbers. Note that (\ref{eq:numbereenojordan}) makes no statement about $B_n^{(k)}$.
\end{remark}

The numerically-generated curves in Fig. \ref{fig:twocurves} intersect, thus forming a number of regions. We have labeled each region by the pair $\left[j_n^{(1)}(\rho),j_n^{(2)}(\rho)\right]$, where $j_n^{(k)}(\rho)$ corresponds to all interior points of the region. For example, the triangularly shaped region in the southeastern part of the left figure is labeled $[1,0]$ because all points $\rho$ interior to this region belong to $E_5^{(1)}$ and $I_5^{(2)}$. For any such $\rho$, $K_5(\rho)$ has one and only one extraordinary eigenvalue; and this eigenvalue is of type-1. {The labels in Fig.} \ref{fig:twocurves} {can be compared to the numbers (0 or 1) in Table 1 of Proposition} \ref{prop:realcase}.

\subsection{Crossing the borderline}

Let us still assume that Conjecture \ref{th:jordan} is true. Besides those used in the proof of Conditional Proposition \ref{th:extraordinary}, it is instructive to consider paths that start in $I^{(k)}_n$, end in $E^{(k)}_n$, and cross $B^{(k)}_n$ once, say at a point $\rho=C$. Let us for definiteness take $k=1$. Although $j_n^{(1)}(\rho)$ always increases by $1$ as we move past $C$, this can occur in several ways:\\
\\
(i) $C$ is not a cusp-like singularity and $C$ does not belong to $B^{(2)}_n$ (i.e., $C$ does not concurrently belong to the \textit{other} borderline curve): In this case, upon moving past $C$, the magnitude of a type-1 simple eigenvalue exceeds the threshold $n$. By Conditional Proposition \ref{th:jordan1}, there is a unique such eigenvalue. Furthermore, no other eigenvalue, of either type, reaches the threshold.\\
\\
(ii) $C\in B^{(1)}_n\cap B^{(2)}_n$ {(for definiteness, let the path start in $I_n^{(1)}\cap I_n^{(2)}$ and end in  $E_n^{(1)}\cap E_n^{(2)}$):} Now, the magnitudes of \textit{two} simple eigenvalues---one of each type---simultaneously exceed the threshold when we move past $C$,  so that  $[j_n^{(1)},j_n^{(2)}]$ changes from $[0,0]$ to $[1,1]$. This case can be visualized via the examples in Fig. \ref{fig:twocurves}.\\
\\
(iii) $C$ is a cusp-like singularity, corresponding to a {borderline/double} type-1 eigenvalue. (The crossing path can, for example, be the local bisector discussed in Section 5.2.) The fact that $j_n^{(1)}(\rho)$ increases by one [rather than two, see (\ref{eq:numberee})] implies that only one eigenvalue crosses the threshold. This must mean that a second eigenvalue reaches the threshold, but then turns back without crossing. This situation, which is reminiscent of bifurcations in a number of physical problems---see, for example, the \textit{exceptional points} of \cite{science}, \cite{prb}, \cite{capolino}---is illustrated for two cases in Fig. \ref{fig:bf795}. 

In the top figure [Fig. \ref{fig:bf795}(a)], a fine zoom (not shown here for brevity) revealed that the line to the left of the cusp-like singularity (i.e., the seemingly single line corresponding to $d<0$) is actually two lines, corresponding to two different eigenvalues of nearly equal magnitudes. 

Fig. \ref{fig:bf795}(b) is like Fig. \ref{fig:bf795}(a) except that $n$ is much larger (95 instead of 7), $C$ is purely imaginary, and $C$ belongs to $B_n^{(2)}$ rather than  $B_n^{(1)}$. This time, movement along the local bisector means that we are on the imaginary axis which, by Proposition \ref{th:symmetries}(iv), bisects the entire curve $B_n^{(2)}$. When $\rho$ is purely imaginary and $n=3,5,\ldots$, it is a corollary of Lemma \ref{lemma:lemma2} that the type-2 eigenvalues of $K_n(\rho)$ either are real, or come in complex-conjugate pairs (similarly to the roots of a polynomial with real coefficients). In Fig. \ref{fig:bf795}(b) it is evident that both these situations occur: For $d<0$ there are two conjugate eigenvalues while, for $d>0$,  there are two real and unequal eigenvalues. In other words, the single line to the left of $d=0$ represents the exactly-coinciding magnitudes of two complex-conjugate eigenvalues, while the two lines to the right correspond to two real eigenvalues of diverging magnitudes {and of phases initially ($d=0$) equal to $\pi$.} {In fact,  both eigenvalues remain negative for all $d\ge 0$.} If we proceed along the positive imaginary semi-axis [beyond the movement depicted in Fig. \ref{fig:bf795}(b)] we will find that the extraordinary  eigenvalue will asymptotically approach $\rho^{n-1}=-|\rho|^{94}$ and that the ordinary one will tend to the limit $-1$. Eqn. (\ref{eq:lambdaapproximations}) below theoretically verifies these two numerical results.

Apart from Fig. \ref{fig:bf795}, we checked all predictions of Sections 6.1 and 6.2 numerically. Many such tests  can serve as indirect corroborations of Conjecture \ref{th:jordan}.
\begin{figure}
\begin{center}
\includegraphics[scale=0.25]{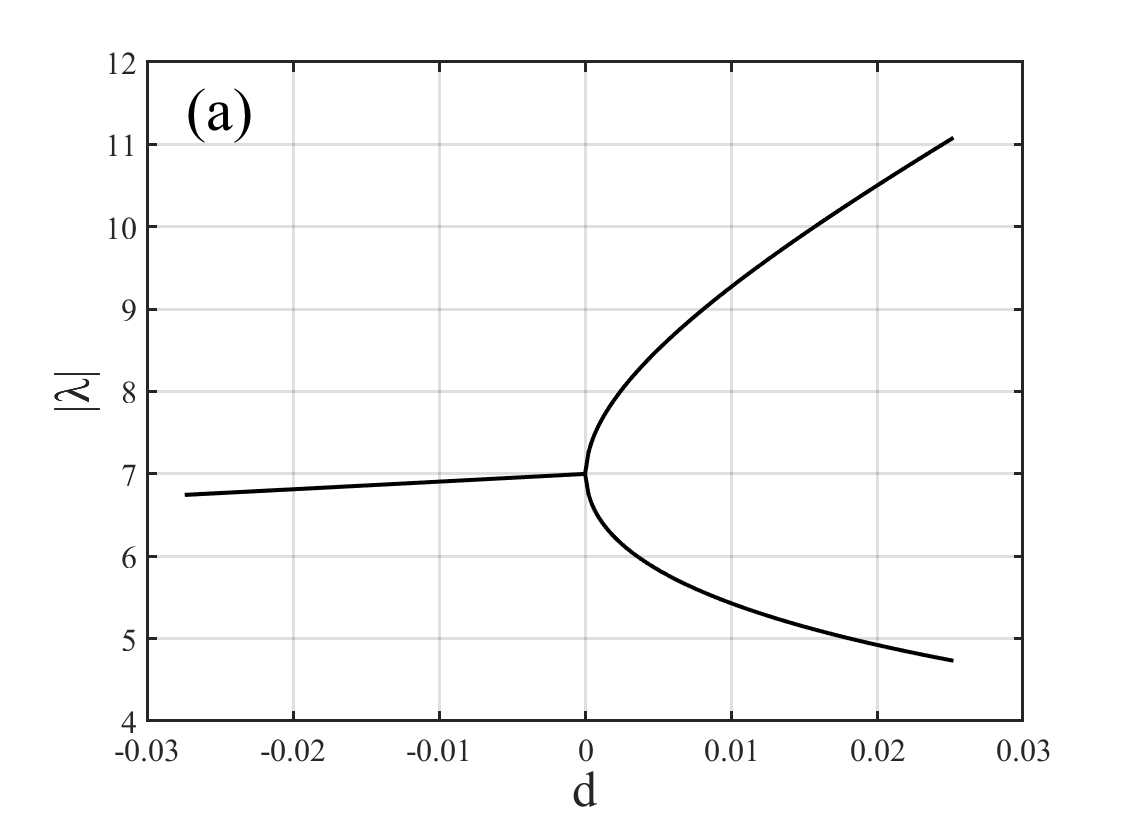}
\includegraphics[scale=0.25]{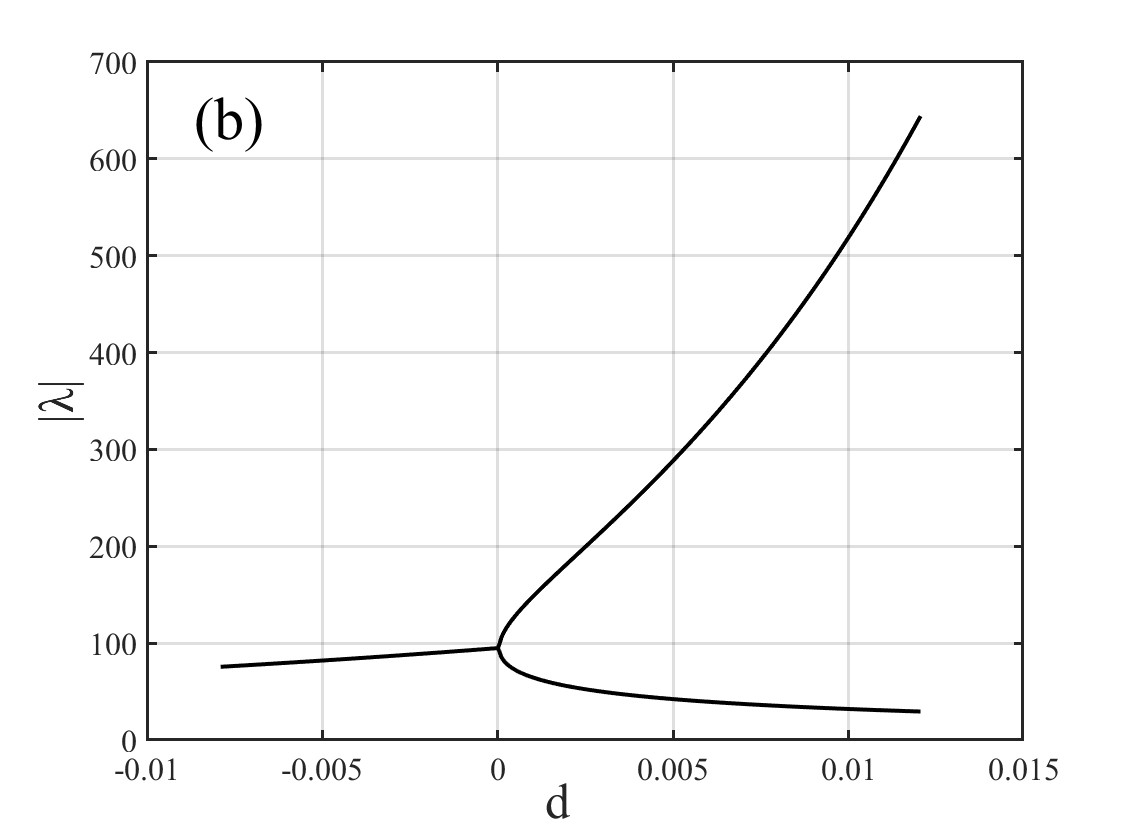}
\end{center}
\caption{{\bf (a)}  Modulus of the two type-1 eigenvalues whose  magnitudes are close to $n$, for $n=7$, as a function of the distance $d$ to the cusp-like singularity at $C=0.77570-i1.49222$. Movement is along the local bisector. The region inside (outside) the curve corresponds to $d<0$ ($d>0$), with $d=0$ at the singularity. {\bf (b)}  As in {\bf (a)}, but with $n=95$, type-2 eigenvalues, and $C=i1.06795$; here, movement is along the imaginary axis, which underlies the local bisector.} \label{fig:bf795}
\end{figure}

\subsection{The limit $|\rho|\rightarrow\infty$} For the case where  
$|\rho|$ is sufficiently large, we now---without assuming Conjecture \ref{th:jordan}---obtain large-$|\rho|$ approximations [eqns. (\ref{eq:lambdaapproximations}) below] to all eigenvalues, valid for any phase of $\rho$. 

Denote the solutions to the two equations in (\ref{eq:eigen1eq}) by $\pm\mu_m$, where an odd (even) index $m$ corresponds to the first (second) equation, corresponding to the type-1 (type-2) case. The $\pm$ sign is irrelevant to the eigenvalues we wish to determine. Via (\ref{eq:eigen1eq}), we can easily justify\footnote{The expressions in (\ref{eq:muapproximations}) can be discovered in a systematic manner by means of the polynomial formulation of Theorem 3.7 of \cite{Fik2018}, whose connection to the equations in (\ref{eq:eigen1eq}) is discussed in Section 4 of   \cite{Fik2018}. The polynomial formulation also assures us that \textit{all} eigenvalues can be found via (\ref{eq:muapproximations}). We finally note that the notations $\mu_m$ and $\lambda_m$ directly correspond to the notations of Theorems 6.5 and 6.6 of \cite{Fik2018} (which deal with the special case $\rho\in\mathbb{R}$) in the following sense: The large-$\rho$ limits ($\rho\in\mathbb{R}$) of the $\mu_m$ and $\lambda_m$ discussed in those theorems coincide with the quantities given in (\ref{eq:muapproximations}) and (\ref{eq:lambdaapproximations}).} the following large-$|\rho|$ approximations: 
\begin{equation}\label{eq:muapproximations}
\mu_m\sim
\begin{cases}
\displaystyle i \ln \rho, {\rm \,if\,\, } m=0\\
\displaystyle i \ln \rho, {\rm \,if\,\, } m=1 \\
\displaystyle \frac{(k-1)\pi}{n-1}, {\rm \,if\,\, } m=2,3,\ldots,n-1   
\end{cases}\quad |\rho|\rightarrow\infty.
\end{equation}
In (\ref{eq:muapproximations}), the large-$|\rho|$ solutions $\mu_2,\mu_3,\ldots,\mu_{n-1}$ correspond to zeros of the denominator of (\ref{eq:eigen1eq}). By contrast, $\mu_0$ and $\mu_1$ result from seeking large-$|\rho|$ solutions such that numerator and denominator have the same order of magnitude. The polynomial formulation of Sections 3 and 4 of \cite{Fik2018} readily ensures that we have found as many solutions to (\ref{eq:eigen1eq}) as are necessary to correspond to \textit{all} eigenvalues.

Denote the corresponding to $\mu_m$ eigenvalue by $\lambda_m$, so that an odd (even) index $m$ corresponds to a type-1 (type-2) eigenvalue. Via (\ref{eq:muapproximations}) and (\ref{eq:lambda}) we obtain
\begin{equation}\label{eq:lambdaapproximations}
\lambda_m\sim
\begin{cases}
\rho^{n-1}, {\rm \,if\,\, } m=0\\
-\rho^{n-1}, {\rm \,if\,\, } m=1 \\
-1, {\rm \,if\,\, } m=2,3,\ldots,n-1   
\end{cases}\quad |\rho|\rightarrow\infty,
\end{equation}
where, for the cases $m=0$ and $m=1$, we have retained the leading term only. Note that (\ref{eq:lambdaapproximations}) predicts $\lambda_0\lambda_1\ldots\lambda_{n-1}\sim(-1)^{n-1}\rho^{2n-2}$, consistent with the fact \cite{Fik2018} that the determinant of $K_n(\rho)$ equals $(1-\rho^2)^{n-1}$. Note also that the large-$n$ formulas for $\lambda_0$ and $\lambda_1$ in Section 5 of \cite{Fik2018} reduce, as $|\rho|\rightarrow\infty$, to the corresponding formulas in (\ref{eq:lambdaapproximations}); this explains why we obtain good numerical agreement even in cases where $\rho$ and $n$ are both large. 

Remark \ref{remark: eeremark} showed that for all sufficiently large $|\rho|$, there are exactly two extraordinary eigenvalues, one of each type. Eqn. (\ref{eq:lambdaapproximations}) further shows that, asymptotically, these eigenvalues ($\lambda_0$ and $\lambda_1$) are equal to plus/minus the largest element of $K_n(\rho)$. Numerically, (\ref{eq:lambdaapproximations}) can give very good results. When, for example, $\rho=15+i12$ and $n=6$, (\ref{eq:lambdaapproximations}) gives three-digit accuracy for the real parts of $\lambda_0$ and $\lambda_1$, and two-digit accuracy for the (smaller) imaginary parts. As for the remaining (ordinary) eigenvalues, the one furthest from $-1$ is approximately $-1.07+i0.05$. Since the matrix elements vary greatly in magnitude when $|\rho|$ is large (this is especially true when $n$ is also large), eqn. (\ref{eq:lambdaapproximations}) can also be useful for numerical computations.

\bibliographystyle{amsplain}

\end{document}